\newtheorem{theorem}{Theorem}[section]
\theoremstyle{plain}
\newtheorem{lemma}{Lemma}[section]
\newtheorem{remark}{Remark}[section]
\numberwithin{equation}{section}
\begin{document}
\title[Exponential decay of solutions]{Exponential decay of solutions for
the plate equation with localized damping}
\author{ Sema Simsek\ \ }
\address{{\small Department of Mathematics,} {\small Faculty of Science,
Hacettepe University, Beytepe 06800}, {\small Ankara, Turkey}}
\email{semasimsek@hacettepe.edu.tr}
\author{ \ Azer Khanmamedov}
\email{azer@hacettepe.edu.tr}
\subjclass[2000]{ 35L05, 35L30, 35B40}
\keywords{wave equation, plate equation, exponential decay}

\begin{abstract}
In this paper, we give positive answer to the open question raised in [E.
Zuazua, Exponential decay for the semilinear wave equation with localized
damping in unbounded domains. J. Math. Pures Appl., 70 (1991) 513--529] on
the exponential decay of solutions for the\ semilinear plate equation with
localized damping.
\end{abstract}

\maketitle

\section{Introduction}

In this paper, we consider the exponential decay of solutions for the plate
equation%
\begin{equation}
u_{tt}+\Delta ^{2}u+a(x)u_{t}+\alpha u+f(u)=0\text{, \ \ }(t,x)\in (0,\infty
)\times
\mathbb{R}
^{n}\text{,}  \tag{1.1}
\end{equation}%
with the initial conditions%
\begin{equation}
u(0,x)=u_{0}(x)\text{, \ }u_{t}(0,x)=u_{1}(x)\text{, \ \ }x\in
\mathbb{R}
^{n}\text{,}  \tag{1.2}
\end{equation}%
where $\alpha >0,$ and the functions $a\left( \cdot \right) $, $f\left(
\cdot \right) $ satisfy the following conditions%
\begin{equation}
a\in L^{\infty }(%
\mathbb{R}
^{n})\text{, }a(\cdot )\geq 0, \text{ a.e. in }%
\mathbb{R}
^{n}\text{,}  \tag{1.3}
\end{equation}%
\begin{equation}
a(\cdot )\geq a_{0}>0\text{ a.e. in }\left\{ x\in
\mathbb{R}
^{n}:\left\vert x\right\vert \geq r_{0}\right\} \text{, for some }r_{0}\text{%
,}  \tag{1.4}
\end{equation}%
\begin{equation}
f\in C^{1}(%
\mathbb{R}
)\text{, }\left\vert f^{\prime }(s)\right\vert \leq C\left( 1+\left\vert
s\right\vert ^{p-1}\right) \text{, }p>1\text{, }(n-4)p\leq n\text{,}
\tag{1.5}
\end{equation}%
\begin{equation}
\text{ }f(s)s\geq 0\text{, for every }s\in
\mathbb{R}
\text{.}  \tag{1.6}
\end{equation}%
\ \

By the semigroup theory, it is well known that under conditions (1.3), (1.5)
and (1.6), for every $\left( u_{0},u_{1}\right) \in H^{2}\left(
\mathbb{R}
^{n}\right) \times L^{2}\left(
\mathbb{R}
^{n}\right) $, problem (1.1)-(1.2) has a unique weak solution in $C\left(
[0,\infty );H^{2}\left(
\mathbb{R}
^{n}\right) \right) \cap $ $C^{1}\left( [0,\infty );L^{2}\left(
\mathbb{R}
^{n}\right) \right) $. The energy functional of problem (1.1)-(1.2) is%
\begin{equation*}
E\left( t,u_{0},u_{1}\right) =\frac{1}{2}\int\nolimits_{%
\mathbb{R}
^{n}}\left( \left\vert u_{t}\left( t,x\right) \right\vert ^{2}+\left\vert
{\small \Delta u}\left( t,x\right) \right\vert ^{2}+\alpha \left\vert
{\small u}\left( t,x\right) \right\vert ^{2}\right) dx+\int\nolimits_{%
\mathbb{R}
^{n}}F\left( u(t,x)\right) dx\text{,}
\end{equation*}%
where $u\left( t,x\right) $ is a weak solution of (1.1)-(1.2) with initial
data $\left( u_{0},u_{1}\right) \in H^{2}\left(
\mathbb{R}
^{n}\right) \times L^{2}\left(
\mathbb{R}
^{n}\right) $ and $F\left( z\right) =\int\nolimits_{0}^{z}f\left( s\right)
ds $, for all $z\in
\mathbb{R}
$.

Exponential decay of the energy for problem (1.1)-(1.2) means that there
exist some constants $C>1$, $\gamma >0$ such that%
\begin{equation*}
E\left( t,u_{0},u_{1}\right) \leq CE\left( 0\,,u_{0},u_{1}\right) e^{-\gamma
t\text{ }}\text{, \ }\forall t\geq 0\text{,}
\end{equation*}%
for every $\left( u_{0},u_{1}\right) \in H^{2}\left(
\mathbb{R}
^{n}\right) \times L^{2}\left(
\mathbb{R}
^{n}\right) $.

Energy decay of the solutions for wave and plate equations has been studied
by many authors under different conditions. We refer to [1-7] for wave
equations and [8-13] for plate equations. In \cite{2} and \cite{3}, the
author showed that the semilinear wave equation with localized damping has
an exponential energy decay under suitable conditions in bounded and
unbounded domains, by reducing the question to a unique continuation problem
which was solved by applying results of \cite{14}. However, the exponential
decay of the energy for (1.1)-(1.2) was introduced as an open question in
\cite[Remark 3.2]{3}, since the techniques of that article were not enough
to obtain the desired result. This is caused by the lack of unique
continuation result for the weak solutions of the plate equation with
nonsmooth coefficients.

The main goal of this paper is to answer this open question. To this end,
using the sequentially limit transition technique (see [15-17]), we firstly
prove the uniformly asymptotic compactness of the family of semigroups (see
Lemma 2.3). Then, using point dissipativity property for the semilinear
plate equation established in \cite{18} and borrowing the energy
inequalities obtained in \cite{3} in the superlinear case, we show the
contraction of the energy for the plate equations (see Lemma 2.6), which
leads to exponential decay of energy for problem (1.1)-(1.2).

Our main result is as follows:

\begin{theorem}
Assume conditions (1.3)-(1.6) hold. Additionally, suppose that either\newline
$\left( i\right) $ (The globally Lipschitz case). $f^{\prime }\in L^{\infty
}\left(
\mathbb{R}
\right) $ and%
\begin{equation}
\underset{s\rightarrow -\infty }{\lim }\frac{f(s)}{s}=\alpha _{1}\in \lbrack
0,\infty )\text{, }\underset{s\rightarrow \infty }{\lim }\frac{f(s)}{s}%
=\alpha _{2}\in \lbrack 0,\infty )\text{,}  \tag{1.7}
\end{equation}%
or\newline
$\left( ii\right) $ (The superlinear case). There exists some $\delta >0$
such that%
\begin{equation}
f\left( s\right) s\geq \left( 2+\delta \right) F\left( s\right) \text{, }%
\forall s\in
\mathbb{R}
\text{.}  \tag{1.8}
\end{equation}%
Then there exist some constants $C>1$ and $\gamma >0$ such that the estimate
\begin{equation*}
E\left( t,u_{0},u_{1}\right) \leq CE\left( 0\,,u_{0},u_{1}\right) e^{-\gamma
t\text{ }}\text{, \ }\forall t\geq 0\text{,}
\end{equation*}%
holds for every weak solution $u\left( t,x\right) $ of (1.1)-(1.2) with
initial data $\left( u_{0},u_{1}\right) \in H^{2}\left(
\mathbb{R}
^{n}\right) \times L^{2}\left(
\mathbb{R}
^{n}\right) $.
\end{theorem}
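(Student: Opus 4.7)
My plan is to reduce exponential decay to an observability-type contraction inequality: to show that for some fixed $T>0$ there exists $\kappa\in(0,1)$ with $E(T,u_0,u_1)\le \kappa\, E(0,u_0,u_1)$ for every admissible initial datum. Iterating this inequality on each interval $[kT,(k+1)T]$ and using the semigroup property then yields the exponential bound with rate $\gamma=-T^{-1}\ln\kappa$. The core task is thus to produce this one-step contraction, which is precisely what Lemma~2.6 is asserted to provide.

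To obtain the contraction I would combine the energy identity
\[
E(0,u_0,u_1)-E(T,u_0,u_1)=\int_{0}^{T}\!\!\int_{\mathbb{R}^{n}}a(x)|u_t|^2\,dx\,dt
\]
with a multiplier estimate. Following the scheme of \cite{3} but adapted to the biharmonic operator, I would test (1.1) against $u$ and against $\eta(x)u$, where $\eta$ is a smooth cut-off equal to $1$ on $\{|x|\le r_{0}\}$ and with $1-\eta$ supported where $a\ge a_{0}$, and integrate over $[0,T]\times\mathbb{R}^{n}$. Using (1.3)--(1.5), this should yield an inequality of the schematic form
\[
E(T,u_0,u_1)\le C\!\int_{0}^{T}\!\!\int_{\mathbb{R}^{n}}a(x)|u_t|^{2}\,dx\,dt+\Phi(u),
\]
where $\Phi(u)$ gathers lower-order terms in norms strictly weaker than the energy norm. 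In case (ii) the algebraic condition (1.8) lets one reabsorb the nonlinear portion exactly as in the superlinear argument of \cite{3}; in case (i) the global Lipschitz property of $f$ together with the asymptotic behaviour (1.7) reduces $f(u)$ to a controlled perturbation of the linear term $\alpha u$.

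The principal obstacle is to absorb $\Phi(u)$. In \cite{3} this was accomplished via unique continuation, a tool unavailable for the plate operator with merely $L^{\infty}$ damping coefficient; precisely this gap was what left the plate case open in \cite[Remark 3.2]{3}. My substitute is a compactness-uniqueness scheme built on the sequential limit transition technique of \cite{15,16,17}. Assuming the contraction fails, I would extract a sequence of solutions $u^{m}$ with $E_{m}(0)=1$ while $\int_{0}^{T}\!\!\int a(x)|u_{t}^{m}|^{2}\,dx\,dt\to 0$. Invoking the uniform asymptotic compactness asserted in Lemma~2.3, a subsequence of $(u^{m},u_{t}^{m})$ converges strongly in the energy space on $[0,T]\times\mathbb{R}^{n}$ to a limit $(u,u_{t})$; passing to the limit in (1.1) shows that $u$ solves the plate equation with $a(x)u_{t}\equiv 0$, hence by (1.4) with $u_{t}\equiv 0$ on $\{|x|\ge r_{0}\}$.

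To close, I would invoke the point dissipativity of the semilinear plate semigroup established in \cite{18}: every trajectory of (1.1)--(1.2) converges as $t\to\infty$ to an equilibrium in $H^{2}(\mathbb{R}^{n})$, and under (1.6) combined with either (1.7) or (1.8) the only such equilibrium is zero. Combined with the rigidity $u_{t}\equiv 0$ on the exterior $\{|x|\ge r_{0}\}$, this forces the limit trajectory $u$ to be identically zero on $[0,T]\times\mathbb{R}^{n}$, contradicting the strong-convergence persistence of the normalization $E_{m}(0)=1$. I expect the most delicate ingredient to be the strong-convergence statement of Lemma~2.3 itself: controlling the spatial tails at infinity and extracting compactness in the absence of elliptic regularity tools adapted to $\Delta^{2}$ on $\mathbb{R}^{n}$ is exactly the difficulty that previously obstructed the Zuazua argument, and it is the only non-routine input that the whole plan depends upon.
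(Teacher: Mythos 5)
There are two genuine gaps. First, your contradiction argument normalizes ``$E_m(0)=1$'' and aims at a one-step contraction to be iterated; but the equation is nonlinear, so a contraction proved for data on the unit energy sphere does not transfer to other energy levels, and after one step the energy is no longer $1$, so the iteration does not close. The paper's way around this is the heart of its proof: it rescales $u_\lambda=u/\lambda$ with $\lambda=\sqrt{E(0,u_0,u_1)}$, which converts (1.1) into the family (2.9) with nonlinearity $\tfrac1\lambda f(\lambda\cdot)$, and proves the contraction (Lemma 2.6) uniformly in $\lambda>0$. This is where hypotheses (1.7) and (1.8) actually do their work --- not, as in your sketch, merely to absorb $f(u)$ in a multiplier estimate, but to control the large-energy limit $\lambda_k\to\infty$: under (1.7) the limit semigroup $S^{\infty}$ (piecewise linear nonlinearity) exists and the uniform asymptotic compactness, continuity and decay results (Lemmas 2.2--2.5) apply to it; under (1.8) the scaling combined with Zuazua's observability inequality (2.38) gives $\lambda_k^{\delta}\int\!\!\int|u_{\lambda_k}|^{2+\delta}\le \widehat C_2$, so the lower-order term in (2.38) vanishes as $\lambda_k\to\infty$ (see (2.41)--(2.42)), with no unique continuation needed. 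Your proposal never confronts this uniformity-in-energy issue, which is precisely what made the problem open rather than a routine adaptation of \cite{3}.

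Second, your compactness--uniqueness closure does not work as written. Lemma 2.3 is an \emph{asymptotic} compactness statement: it concerns sequences $S^{\lambda_k}(t_k)\varphi_k$ with $t_k\to\infty$, and does not give strong convergence in the energy space of a bounded family of solutions on a fixed interval $[0,T]$ (the semigroup has no smoothing), so the strong limit you extract at time $0$ is not available from it. Moreover, even granting a limiting solution on $[0,T]$ with $u_t\equiv0$ on $\{|x|\ge r_0\}$, concluding $u\equiv0$ is exactly the unique continuation property for the plate operator with $L^\infty$ damping whose absence you yourself identify as the obstruction; the point dissipativity result of \cite{18} is a statement about the behavior as $t\to\infty$ of a full trajectory and cannot be invoked for a solution known only on a finite time interval. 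The paper avoids this by building the time shift $t_k\to\infty$ into the compactness lemma, transferring the decay of the limit semigroups $S^{\lambda}$, $\lambda\in[0,\infty]$ (obtained from \cite{18} together with the compact $\omega$-limit sets) back to the contradiction sequence via Lemmas 2.2, 2.4 and 2.5, and, in the superlinear case, replacing any rigidity step by the scaling argument (2.39)--(2.43).
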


\begin{remark}
We note that applying the method of this paper, of course, using suitable
multipliers for the proof of Lemma 2.3, one can prove the exponential decay
of the weak solutions for the initial boundary value problem%
\begin{equation*}
\left\{
\begin{array}{c}
u_{tt}+\Delta ^{2}u+a(x)u_{t}+\alpha u+f(u)=0\text{, \ \ }(t,x)\in (0,\infty
)\times \Omega \text{,} \\
u(t,x)=\frac{\partial }{\partial \nu }u(t,x)=0\text{, \ \ \ \ \ \ \ \ \ \ \
\ \ \ \ \ \ }(t,x)\in (0,\infty )\times \partial \Omega \text{,} \\
u(0,x)=u_{0}(x)\text{, \ \ \ \ }u_{t}(0,x)=u_{1}(x)\text{, \ \ \ \ \ \ \ \ \
\ \ \ \ \ \ \ \ \ }x\in \Omega \text{,}%
\end{array}%
\right.
\end{equation*}%
where $\alpha >0$, $\Omega \subset
\mathbb{R}
^{n}$ is a domain with smooth boundary, $\nu $ is outer unit normal vector,
the nonlinear function $f(\cdot )$ satisfies the conditions (1.5), (1.6) and
either (1.7) or (1.8), the damping coefficient $a(\cdot )$ satisfies the
following conditions%
\begin{equation*}
a\in L^{\infty }(\Omega )\text{, }a(\cdot )\geq 0,\text{ a.e. in }\Omega
\text{,}
\end{equation*}%
and
\begin{equation*}
a(\cdot )\geq a_{0}>0\text{ a.e. in }\omega \text{, for some }\omega \subset
\Omega \text{, such that }
\end{equation*}%
\begin{equation*}
\omega =\left\{
\begin{array}{c}
\text{ a neighbourhood of the boundary }\partial \Omega \text{, \ \ \ \ if }%
\Omega \text{ is bounded, \ \ \ \ \ \ \ \ \ \ \ \ \ \ \ \ \ \ \ \ \ \ \ \ \
\ \ \ \ \ \ \ \ \ } \\
\text{the union of a neighbourhood of the boundary }\partial \Omega \text{
and }\left\{ x\in \Omega :\left\vert x\right\vert \geq r_{0}\right\} \text{,
if }\Omega \text{ is unbounded.}%
\end{array}%
\right.
\end{equation*}
\end{remark}

\bigskip

\section{Proof of Theorem 1.1}

We start with the following lemmas.

\begin{lemma}
Let us assume the condition (1.5) is satisfied . If the sequence $\left\{
u_{k}\right\} _{k=1}^{\infty }$ weakly converges in $H^{2}(%
\mathbb{R}
^{n})$ and the positive sequence $\left\{ \lambda _{k}\right\}
_{k=1}^{\infty }$ converges, then there exists $C=C(f,\underset{k}{\sup }%
\lambda _{k},$ $\underset{k}{\sup }\left\Vert u_{k}\right\Vert _{H^{2}(%
\mathbb{R}
^{n})})>0$ such that%
\begin{equation*}
\underset{m\rightarrow \infty }{\lim \sup }\underset{k\rightarrow \infty }{%
\lim \sup }\left\Vert \frac{1}{\lambda _{k}}f(\lambda _{k}u_{k})-\frac{1}{%
\lambda _{m}}f(\lambda _{m}u_{m})\right\Vert _{L^{2}(%
\mathbb{R}
^{n})}
\end{equation*}%
\begin{equation}
\leq C\underset{m\rightarrow \infty }{\lim \sup }\underset{k\rightarrow
\infty }{\lim \sup }\left\Vert u_{k}-u_{m}\right\Vert _{H^{2}(%
\mathbb{R}
^{n})}\text{.}  \tag{2.1}
\end{equation}%
Furthermore, if, additionally, condition (1.7) is satisfied, then (2.1) also
holds for $\lambda _{k}\rightarrow \infty $, with the constant $C$ depending
only on $f$.
\end{lemma}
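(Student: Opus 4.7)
The plan is to interpolate both arguments simultaneously. Setting $\mu_s:=\lambda_m+s(\lambda_k-\lambda_m)$ and $v_s:=u_m+s(u_k-u_m)$ for $s\in[0,1]$, differentiating $s\mapsto \mu_s^{-1}f(\mu_s v_s)$ gives
\begin{equation*}
\frac{1}{\lambda_k}f(\lambda_k u_k)-\frac{1}{\lambda_m}f(\lambda_m u_m)=(\lambda_k-\lambda_m)\int_0^1 E_s\,ds+\int_0^1 f'(\mu_s v_s)(u_k-u_m)\,ds,
\end{equation*}
where $E_s=\mu_s^{-2}[\mu_s v_s f'(\mu_s v_s)-f(\mu_s v_s)]$. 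Bounding the $L^2$-norm of the left-hand side by $|\lambda_k-\lambda_m|\sup_s\|E_s\|_{L^2}+\sup_s\|f'(\mu_sv_s)(u_k-u_m)\|_{L^2}$, I would show that (i) the second summand is $\leq C\|u_k-u_m\|_{H^2}$ pointwise in $k,m$, and (ii) $\sup_s\|E_s\|_{L^2}$ is bounded uniformly in $s,k,m$; the factor $\lambda_k-\lambda_m\to 0$ then kills the first summand under the iterated $\limsup$, since both $\lambda_k$ and $\lambda_m$ converge to the same limit.

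For (i), I apply Hölder with conjugate exponents $r=\frac{2n}{(n-4)(p-1)}$ and $s^*$ satisfying $\frac{1}{r}+\frac{1}{s^*}=\frac{1}{2}$, reducing the matter to $\|f'(\mu_s v_s)\|_{L^r}\leq C$ and $\|u_k-u_m\|_{L^{s^*}}\leq C\|u_k-u_m\|_{H^2}$. Under (1.5), $r(p-1)=\frac{2n}{n-4}$ lies in the Sobolev range, while $(n-4)p\leq n$ forces $s^*\leq\frac{2n}{n-4}$; both conditions then follow from the Sobolev embedding $H^2(\mathbb{R}^n)\hookrightarrow L^q(\mathbb{R}^n)$ valid for every $q\leq\frac{2n}{n-4}$, producing the desired bound with $C=C(f,\sup_k\lambda_k,\sup_k\|u_k\|_{H^2})$. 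Under (1.7), the trivial estimate $\|f'\|_{L^\infty}\|u_k-u_m\|_{L^2}\leq\|f'\|_{L^\infty}\|u_k-u_m\|_{H^2}$ suffices with $C=C(f)$, which is exactly what is needed in the extension to $\lambda_k\to\infty$.

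For (ii), condition (1.5) combined with $f(0)=0$ (a consequence of (1.6) and $f\in C^1$) yields $|f(z)|+|zf'(z)|\leq C(|z|+|z|^p)$, hence $|E_s|\leq C(|v_s|/\mu_s+\mu_s^{p-2}|v_s|^p)$. When $\lim\lambda_k\in(0,\infty)$, the quantities $\mu_s$ eventually lie in a compact subinterval of $(0,\infty)$, so the Sobolev embedding $H^2\hookrightarrow L^{2p}$ gives $\|E_s\|_{L^2}\leq C(\|v_s\|_{L^2}+\|v_s\|_{L^{2p}}^p)\leq C$ uniformly in $s,k,m$. Since $|\lambda_k-\lambda_m|\to 0$, the first integral vanishes in the iterated $\limsup$, completing the proof in this case.

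The main obstacle I anticipate is extending the argument to the case $\lambda_k\to\infty$ under (1.7). There, $\mu_s$ diverges and the estimate $|E_s|\leq 2\|f'\|_\infty|v_s|/\mu_s$ gives $|\lambda_k-\lambda_m|\|E_s\|_{L^2}\lesssim |\lambda_k-\lambda_m|\|v_s\|_{L^2}/\mu_s$, which need not vanish along the sequence (e.g.\ when $\lambda_k=k$, $\lambda_m=m$). To close this case I would replace the straight-line interpolation by the explicit decomposition $\lambda^{-1}f(\lambda s)=\hat f(s)+\rho_\lambda(s)$, where $\hat f(s)=\alpha_2 s_+-\alpha_1 s_-$ is Lipschitz with constant $\max(\alpha_1,\alpha_2)$ and the remainder satisfies $|\rho_\lambda(s)|\leq \|f'\|_{L^\infty}|s|$ together with $|\rho_\lambda(s)|/|s|\to 0$ as $\lambda|s|\to\infty$. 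The Lipschitz piece contributes at most $\max(\alpha_1,\alpha_2)\|u_k-u_m\|_{H^2}$, and the remainder is handled by a truncation $\{|\lambda_k u_k|\leq M\}\cup\{|\lambda_k u_k|>M\}$: on the large set one exploits the decay of $\rho_\lambda$ via (1.7), while on the small set one uses $|u_k|\leq M/\lambda_k$ together with a Chebyshev tail bound from the uniform $L^{2p}$-control given by Sobolev. The delicate point is extracting enough tightness from mere weak $H^2$-convergence of $\{u_k\}$, particularly at the critical Sobolev exponent $(n-4)p=n$, to carry through the remainder estimate uniformly.
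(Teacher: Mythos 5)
Your interpolation argument is fine for the case $\lim_k\lambda_k\in(0,\infty)$ (modulo a small repair: the constant $1$ in $|f'(z)|\leq C(1+|z|^{p-1})$ is not in $L^{r}(\mathbb{R}^n)$, so you must split off the term $C\|u_k-u_m\|_{L^2}$ before applying H\"older, exactly as one splits in the paper), but it leaves two genuine gaps. First, the lemma allows the positive sequence $\lambda_k$ to converge to $0$ — a case the paper needs later (Lemma 2.2 covers $\lambda_0\in[0,\infty)$ with $\Phi_0(u)=f'(0)u$) — and there your bound breaks: $\sup_s\|E_s\|_{L^2}$ contains the term $\|v_s\|_{L^2}/\mu_s$, which is of size $1/\min(\lambda_k,\lambda_m)$, and the prefactor ratio $|\lambda_k-\lambda_m|/\min(\lambda_k,\lambda_m)$ does not tend to zero under the iterated $\limsup$ (take $\lambda_k=1/k$, $m$ fixed, $k\to\infty$). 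Since $f$ is only $C^1$, you cannot improve $|zf'(z)-f(z)|$ beyond a non-quantitative $o(|z|)$ near $0$, so the straight-line interpolation does not close this case. Second, you yourself leave the case $\lambda_k\to\infty$ under (1.7) open, and the tightness obstruction you identify is real for your route, because you compare $u_k$ with $u_m$ directly.

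The missing idea is the one the paper uses: anchor the comparison at the weak limit $u$ via the triangle inequality, writing the difference through $\frac{1}{\lambda_k}f(\lambda_k u)$ and $\frac{1}{\lambda_m}f(\lambda_m u)$. The two outer terms are differences at the \emph{same} $\lambda$, hence controlled by $C\|u_k-u\|_{H^2}$ and $C\|u_m-u\|_{H^2}$ (in the Lipschitz case simply by $\|f'\|_{L^\infty}\|u_k-u\|_{L^2}$), while the middle term involves only the single fixed function $u$: by (1.7) it converges a.e. to $(\alpha_1\chi_{\{u<0\}}+\alpha_2\chi_{\{u>0\}})u$ (resp. to $f'(0)u$ when $\lambda_0=0$) and is dominated by $C|u|$ (resp. $C(|u|+|u|^p)$) which lies in $L^2$, so Lebesgue dominated convergence kills it — no uniform integrability over the sequence is needed, which is precisely what resolves your "delicate point." One then converts the resulting bound $C(\limsup_k\|u_k-u\|_{H^2}+\limsup_m\|u_m-u\|_{H^2})$ into (2.1) using the identity $\limsup_m\limsup_k\|u_k-u_m\|_{H^2}^2=2\limsup_k\|u_k-u\|_{H^2}^2$, valid exactly because $u_k\rightharpoonup u$; note that your version never uses the weak convergence hypothesis, which is a sign the hard cases are being bypassed rather than solved.
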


\begin{proof}
Let $u_{k}\rightarrow u$ weakly in $H^{2}(%
\mathbb{R}
^{n})$ and $\lambda _{k}\rightarrow \lambda _{0}\in \left[ 0,\infty \right] $%
. By triangle inequality, we have%
\begin{equation*}
\left\Vert \frac{1}{\lambda _{k}}f(\lambda _{k}u_{k})-\frac{1}{\lambda _{m}}%
f(\lambda _{m}u_{m})\right\Vert _{L^{2}(%
\mathbb{R}
^{n})}\leq \left\Vert \frac{1}{\lambda _{k}}f(\lambda _{k}u_{k})-\frac{1}{%
\lambda _{k}}f(\lambda _{k}u)\right\Vert _{L^{2}(%
\mathbb{R}
^{n})}
\end{equation*}%
\begin{equation}
+\left\Vert \frac{1}{\lambda _{k}}f(\lambda _{k}u)-\frac{1}{\lambda _{m}}%
f(\lambda _{m}u)\right\Vert _{L^{2}(%
\mathbb{R}
^{n})}+\left\Vert \frac{1}{\lambda _{m}}f(\lambda _{m}u)-\frac{1}{\lambda
_{m}}f(\lambda _{m}u_{m})\right\Vert _{L^{2}(%
\mathbb{R}
^{n})}\text{.}  \tag{2.2}
\end{equation}%
Since $H^{2}(%
\mathbb{R}
^{n})\subset L^{\frac{2n}{(n-4)^{+}}}(%
\mathbb{R}
^{n})\cap L^{2}(%
\mathbb{R}
^{n})$, from (1.5) and Holder inequality, we obtain%
\begin{equation*}
\left\Vert \frac{1}{\lambda _{k}}f(\lambda _{k}u_{k})-\frac{1}{\lambda _{k}}%
f(\lambda _{k}u)\right\Vert _{L^{2}(%
\mathbb{R}
^{n})}
\end{equation*}%
\begin{equation*}
\leq C_{1}\left( \int\nolimits_{%
\mathbb{R}
^{n}}\left\vert u_{k}-u\right\vert ^{2}\left( 1+\left( \lambda
_{k}u_{k}\right) ^{2(p-1)}+\left( \lambda _{k}u\right) ^{2(p-1)}\right)
dx\right) ^{\frac{1}{2}}
\end{equation*}%
\begin{equation*}
\leq C_{2}\left( \left\Vert u_{k}-u\right\Vert _{L^{2}(%
\mathbb{R}
^{n})}+\left\Vert u_{k}-u\right\Vert _{H^{2}(%
\mathbb{R}
^{n})}\left( \left\Vert \lambda _{k}u_{k}\right\Vert _{H^{2}(%
\mathbb{R}
^{n})}^{p-1}+\left\Vert \lambda _{k}u\right\Vert _{H^{2}(%
\mathbb{R}
^{n})}^{p-1}\right) \right)
\end{equation*}%
which yields that%
\begin{equation}
\left\Vert \frac{1}{\lambda _{k}}f(\lambda _{k}u_{k})-\frac{1}{\lambda _{k}}%
f(\lambda _{k}u)\right\Vert _{L^{2}(%
\mathbb{R}
^{n})}\leq C_{3}\left\Vert u_{k}-u\right\Vert _{H^{2}(%
\mathbb{R}
^{n})}\text{,}  \tag{2.3}
\end{equation}%
for $\lambda _{0}\in \lbrack 0,\infty )$, where the positive constant $C_{3}$
depends on $\underset{k}{\sup }\lambda _{k}$ and $\underset{k}{\sup }%
\left\Vert u_{k}\right\Vert _{H^{2}(%
\mathbb{R}
^{n})}$. If, additionally, condition (1.7) is satisfied, then (2.3)
immediatly follows from (1.7), for every $\lambda _{0}\in \left[ 0,\infty %
\right] $. In this case the constant on the right hand side of (2.3) depends
only on $\left\Vert f^{\prime }\right\Vert _{L^{\infty }\left(
\mathbb{R}
\right) }$. Because of the same arguments,%
\begin{equation}
\left\Vert \frac{1}{\lambda _{m}}f(\lambda _{m}u)-\frac{1}{\lambda _{m}}%
f(\lambda _{m}u_{m})\right\Vert _{L^{2}(%
\mathbb{R}
^{n})}\leq C_{4}\left\Vert u_{m}-u\right\Vert _{H^{2}(%
\mathbb{R}
^{n})}  \tag{2.4}
\end{equation}%
holds. We distinguish the following three possibilities for the term $%
\left\Vert \frac{1}{\lambda _{k}}f(\lambda _{k}u)-\frac{1}{\lambda _{m}}%
f(\lambda _{m}u)\right\Vert _{L^{2}(%
\mathbb{R}
^{n})}$.

\textbf{Case 1: }$\lambda _{0}\in \left( 0,\infty \right) $.

By continuity of $f$, we get
\begin{equation*}
\underset{k\rightarrow \infty }{\lim }\frac{1}{\lambda _{k}}f(\lambda _{k}u)=%
\frac{1}{\lambda _{0}}f(\lambda _{0}u)\text{ a.e. in }%
\mathbb{R}
^{n}\text{.}
\end{equation*}%
Since $\left\{ \lambda _{k}\right\} _{k=1}^{\infty }$ is convergent,%
\begin{equation*}
\left\vert \frac{1}{\lambda _{k}}f(\lambda _{k}u)\right\vert \leq
C_{5}\left( \left\vert u\right\vert +\left\vert u\right\vert ^{p}\right)
\end{equation*}%
holds and we deduce%
\begin{equation*}
\left\vert \frac{1}{\lambda _{k}}f(\lambda _{k}u)-\frac{1}{\lambda _{m}}%
f(\lambda _{m}u)\right\vert ^{2}\leq C_{6}(\left\vert u\right\vert
^{2}+\left\vert u\right\vert ^{2p})\text{.}
\end{equation*}%
Since $H^{2}(%
\mathbb{R}
^{n})\subset L^{2p}(%
\mathbb{R}
^{n})$, by Lebesgue dominated convergence theorem, we have%
\begin{equation}
\underset{m\rightarrow \infty }{\lim }\underset{k\rightarrow \infty }{\lim }%
\left\Vert \frac{1}{\lambda _{k}}f(\lambda _{k}u)-\frac{1}{\lambda _{m}}%
f(\lambda _{m}u)\right\Vert _{L^{2}(%
\mathbb{R}
^{n})}^{2}=0\text{.}  \tag{2.5}
\end{equation}%
\textbf{Case 2: }$\lambda _{0}=0$.

Define $\ Q_{1}:=\left\{ x\in
\mathbb{R}
^{n}:u(x)\neq 0\right\} ,$ $Q_{2}:=\{x\in
\mathbb{R}
^{n}:u(x)=0\}$. Then we obtain%
\begin{equation*}
\underset{k\rightarrow \infty }{\lim }\frac{1}{\lambda _{k}}f(\lambda _{k}u)=%
\underset{k\rightarrow \infty }{\lim }\frac{f(\lambda _{k}u)}{\lambda _{k}u}%
u=f^{\prime }(0)u,\text{ a.e. in }Q_{1}
\end{equation*}%
and from (1.6) it follows that%
\begin{equation*}
\underset{k\rightarrow \infty }{\lim }\frac{1}{\lambda _{k}}f(\lambda
_{k}u)=0=f^{\prime }(0)u\text{, in }Q_{2}\text{.}
\end{equation*}%
Similar to case 1, by Lebesgue dominated convergence theorem, we find%
\begin{equation}
\underset{m\rightarrow \infty }{\lim }\underset{k\rightarrow \infty }{\lim }%
\left\Vert \frac{1}{\lambda _{k}}f(\lambda _{k}u)-\frac{1}{\lambda _{m}}%
f(\lambda _{m}u)\right\Vert _{L^{2}(%
\mathbb{R}
^{n})}^{2}=0\text{.}  \tag{2.6}
\end{equation}%
\textbf{Case 3: \ }$\lambda _{0}=\infty $ and, additionally, condition (1.7)
is satisfied.

Define $\widehat{Q}_{1}:=\left\{ x\in
\mathbb{R}
^{n}:u(x)<0\right\} $, $\widehat{Q}_{2}:=\left\{ x\in
\mathbb{R}
^{n}:u(x)>0\right\} $ and $\widehat{Q}_{3}:=\left\{ x\in
\mathbb{R}
^{n}:u(x)=0\right\} $.\newline
Taking into account (1.6)-(1.7), we get%
\begin{equation*}
\underset{k\rightarrow \infty }{\lim }\frac{1}{\lambda _{k}}f(\lambda _{k}u)=%
\underset{k\rightarrow \infty }{\lim }\frac{f(\lambda _{k}u)}{\lambda _{k}u}%
u=\alpha _{1}u,\text{ a.e. in }\widehat{Q}_{1}\text{,}
\end{equation*}%
\begin{equation*}
\underset{k\rightarrow \infty }{\lim }\frac{1}{\lambda _{k}}f(\lambda _{k}u)=%
\underset{k\rightarrow \infty }{\lim }\frac{f(\lambda _{k}u)}{\lambda _{k}u}%
u=\alpha _{2}u,\text{ a.e. in }\widehat{Q}_{2}\text{,}
\end{equation*}%
\begin{equation*}
\underset{k\rightarrow \infty }{\lim }\frac{1}{\lambda _{k}}f(\lambda
_{k}u)=0,\text{ in }\widehat{Q}_{3}\text{.}
\end{equation*}%
Hence, we deduce%
\begin{equation*}
\underset{k\rightarrow \infty }{\lim }\frac{1}{\lambda _{k}}f(\lambda
_{k}u)=\left( \alpha _{1}\chi _{Q_{1}}+\alpha _{2}\chi _{Q_{2}\text{ }%
}\right) u\text{,\ \ a.e. in }%
\mathbb{R}
^{n}\text{.}
\end{equation*}%
Since
\begin{equation*}
\left\vert \frac{1}{\lambda _{k}}f(\lambda _{k}u)-\frac{1}{\lambda _{m}}%
f(\lambda _{m}u)\right\vert ^{2}\leq 2\left( \left\vert \frac{f(\lambda
_{k}u)}{\lambda _{k}}\right\vert ^{2}+\left\vert \frac{f(\lambda _{m}u)}{%
\lambda _{m}}\right\vert ^{2}\right) \leq C_{7}u^{2}\text{, a.e. in }%
\mathbb{R}
^{n}\text{,}
\end{equation*}%
again by Lebesgue dominated convergence theorem,%
\begin{equation}
\underset{m\rightarrow \infty }{\lim }\underset{k\rightarrow \infty }{\lim }%
\left\Vert \frac{1}{\lambda _{k}}f(\lambda _{k}u)-\frac{1}{\lambda _{m}}%
f(\lambda _{m}u)\right\Vert _{L^{2}(%
\mathbb{R}
^{n})}^{2}=0\text{.}  \tag{2.7}
\end{equation}%
Considering (2.2)-(2.7), we obtain%
\begin{equation*}
\underset{m\rightarrow \infty }{\lim \sup }\underset{k\rightarrow \infty }{%
\lim \sup }\left\Vert \frac{1}{\lambda _{k}}f(\lambda _{k}u_{k})-\frac{1}{%
\lambda _{m}}f(\lambda _{m}u_{m})\right\Vert _{L^{2}(%
\mathbb{R}
^{n})}
\end{equation*}%
\begin{equation}
\leq C_{8}\left( \underset{k\rightarrow \infty }{\lim \sup }\left\Vert
u_{k}-u\right\Vert _{H^{2}(%
\mathbb{R}
^{n})}+\underset{m\rightarrow \infty }{\lim \sup }\left\Vert
u_{m}-u\right\Vert _{H^{2}(%
\mathbb{R}
^{n})}\right) \text{.}  \tag{2.8}
\end{equation}%
It is easy to verify that%
\begin{equation*}
\underset{m\rightarrow \infty }{\lim \sup }\underset{k\rightarrow \infty }{%
\lim \sup }\left\Vert u_{k}-u_{m}\right\Vert _{H^{2}(%
\mathbb{R}
^{n})}^{2}=2\underset{k\rightarrow \infty }{\lim \sup }\left\Vert
u_{k}-u\right\Vert _{H^{2}(%
\mathbb{R}
^{n})}^{2}
\end{equation*}%
which, together with (2.8), yields (2.1).
\end{proof}

Let us consider the following problem%
\begin{equation}
\left\{
\begin{array}{c}
u_{\lambda tt}+\Delta ^{2}u_{\lambda }+a(x)u_{\lambda t}+\alpha u_{\lambda
}+\Phi _{\lambda }(u_{\lambda })=0\text{, \ \ }(t,x)\in (0,\infty )\times
\mathbb{R}
^{n}\text{,} \\
u_{\lambda }(0,\cdot )=u_{0\lambda }\in H^{2}(%
\mathbb{R}
^{n})\text{,}\ \ \ \ u_{\lambda t}(0,\cdot )=u_{1\lambda }\in L^{2}(%
\mathbb{R}
^{n})\text{, \ \ \ \ \ \ \ \ \ \ \ \ \ \ \ \ }%
\end{array}%
\right.  \tag{2.9}
\end{equation}%
where $\Phi _{\lambda }(u)=\left\{
\begin{array}{c}
f^{\prime }(0)u\text{, \ \ \ \ \ \ \ \ \ \ \ \ \ if }\lambda =0\text{, \ \ \
} \\
\frac{1}{\lambda }f(\lambda u)\text{, \ \ \ \ \ \ \ if }\lambda \in \left(
0,\infty \right) \text{,} \\
\left\{
\begin{array}{c}
\alpha _{1}u\text{, \ }u\leq 0 \\
\alpha _{2}u\text{, \ }u>0%
\end{array}%
\right. \text{, \ \ if \ }\lambda =\infty \text{ }%
\end{array}%
\right. $. By using semigroup theory, it is easy to show that under
conditions (1.3), (1.5) and (1.6), problem (2.9) generates strongly
continuous semigroup $\left\{ S^{\lambda }(t)\right\} _{t\geq 0}$ in $H^{2}(%
\mathbb{R}
^{n})\times L^{2}(%
\mathbb{R}
^{n})$, for every $\lambda \in \lbrack 0,\infty ]$.

\begin{lemma}
Assume the conditions (1.3), (1.5) and (1.6). If $0<\lambda _{k}\rightarrow
\lambda _{0}\in \lbrack 0,\infty )$ and $\left( u_{0k},u_{1k}\right)
\rightarrow (u_{0},u_{1})$ strongly in $H^{2}(%
\mathbb{R}
^{n})\times L^{2}(%
\mathbb{R}
^{n})$, then we have%
\begin{equation}
S^{\lambda _{k}}(t)(u_{0k},u_{1k})\rightarrow S^{\lambda
_{0}}(t)(u_{0},u_{1})\text{ strongly in }H^{2}(%
\mathbb{R}
^{n})\times L^{2}(%
\mathbb{R}
^{n})\text{.}  \tag{2.10}
\end{equation}%
Furthermore, if, additionally, condition (1.7) is satisfied, then (2.10)
also holds for $\lambda _{0}=\infty $.
\end{lemma}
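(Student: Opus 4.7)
My plan is an energy/Gr\"onwall argument for the difference $v_{k}:=u_{\lambda _{k}}-u_{\lambda _{0}}$, where $u_{\lambda _{k}}(t)=S^{\lambda _{k}}(t)(u_{0k},u_{1k})$ and $u_{\lambda _{0}}(t)=S^{\lambda _{0}}(t)(u_{0},u_{1})$. Fix $T>0$ arbitrary; it suffices to prove (2.10) in $C([0,T];H^{2}(\mathbb{R}^{n}))\times C([0,T];L^{2}(\mathbb{R}^{n}))$.

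First I would establish a uniform a priori bound on $\{u_{\lambda _{k}}\}$. Since $\Phi _{\lambda }(s)s\geq 0$ for all $s\in \mathbb{R}$ (this follows from (1.6) in every case of the definition of $\Phi _{\lambda }$), the primitive $F_{\lambda }(z)=\int _{0}^{z}\Phi _{\lambda }(s)ds$ is nonnegative, and multiplying the equation in (2.9) by $u_{\lambda t}$ yields the standard energy identity with a dissipation term $-\int a|u_{\lambda t}|^{2}dx\leq 0$. Since $(u_{0k},u_{1k})$ converges in $H^{2}\times L^{2}$, initial energies are uniformly bounded (the term $\int F_{\lambda _{k}}(u_{0k})dx$ being controlled by (1.5) and Sobolev embedding); hence $\|u_{\lambda _{k}}\|_{L^{\infty }(0,T;H^{2})}+\|u_{\lambda _{k},t}\|_{L^{\infty }(0,T;L^{2})}\leq M$, uniformly in $k$.

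Next I would write the PDE satisfied by $v_{k}$,
\begin{equation*}
v_{k,tt}+\Delta ^{2}v_{k}+a(x)v_{k,t}+\alpha v_{k}+\bigl[\Phi _{\lambda _{k}}(u_{\lambda _{k}})-\Phi _{\lambda _{0}}(u_{\lambda _{0}})\bigr]=0,
\end{equation*}
multiply by $v_{k,t}$, and split the nonlinear term as
\begin{equation*}
\Phi _{\lambda _{k}}(u_{\lambda _{k}})-\Phi _{\lambda _{0}}(u_{\lambda _{0}})=\bigl[\Phi _{\lambda _{k}}(u_{\lambda _{k}})-\Phi _{\lambda _{k}}(u_{\lambda _{0}})\bigr]+\bigl[\Phi _{\lambda _{k}}(u_{\lambda _{0}})-\Phi _{\lambda _{0}}(u_{\lambda _{0}})\bigr].
\end{equation*}
For the first bracket, inequality (2.3) from the proof of Lemma 2.1, applied pointwise in $t$, gives a uniform-in-$k$ Lipschitz bound $\|\Phi _{\lambda _{k}}(u_{\lambda _{k}}(t))-\Phi _{\lambda _{k}}(u_{\lambda _{0}}(t))\|_{L^{2}}\leq C(M,f,\sup _{k}\lambda _{k})\,\|v_{k}(t)\|_{H^{2}}$ (the constant depending only on $\|f'\|_{L^{\infty}}$ in the case $\lambda _{0}=\infty $ under (1.7)). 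Inserting this, using Young's inequality to absorb $\|v_{k}\|_{H^{2}}\|v_{k,t}\|_{L^{2}}$ into $E(v_{k})$, and applying Gr\"onwall on $[0,T]$ leads to
\begin{equation*}
E(v_{k})(t)\leq e^{C'T}\Bigl(E(v_{k})(0)+\int _{0}^{T}\|\Phi _{\lambda _{k}}(u_{\lambda _{0}}(\tau ))-\Phi _{\lambda _{0}}(u_{\lambda _{0}}(\tau ))\|_{L^{2}}^{2}d\tau \Bigr).
\end{equation*}

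Finally I would show that both terms on the right tend to zero. $E(v_{k})(0)\to 0$ because $(u_{0k},u_{1k})\to (u_{0},u_{1})$ strongly in $H^{2}\times L^{2}$. For the source term, the analysis of Cases 1--3 in the proof of Lemma 2.1, applied with the single weak limit replaced by the fixed function $u=u_{\lambda _{0}}(\tau ,\cdot )$, yields $\Phi _{\lambda _{k}}(u_{\lambda _{0}}(\tau ,x))\to \Phi _{\lambda _{0}}(u_{\lambda _{0}}(\tau ,x))$ for a.e. $x$, together with a dominating function of the form $C(|u_{\lambda _{0}}|+|u_{\lambda _{0}}|^{p})$ (for $\lambda _{0}\in [0,\infty )$) or $C|u_{\lambda _{0}}|$ (for $\lambda _{0}=\infty $ under (1.7)), which lies in $L^{2}(\mathbb{R}^{n})$ by Sobolev embedding. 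A double application of Lebesgue dominated convergence---first in $x$ at each $\tau $ to get pointwise-in-$\tau $ convergence, then in $\tau $ using that the $\tau $-integrand is uniformly bounded by $C\|u_{\lambda _{0}}(\tau )\|_{H^{2}}^{2}\in L^{\infty }(0,T)$---gives the desired convergence to zero.

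The main obstacle is the case $\lambda _{0}=\infty $: without condition (1.7), the naive bound from (1.5) would give $|\Phi _{\lambda _{k}}(s)|\leq C\lambda _{k}^{p-1}|s|^{p}$, which blows up, so the Lipschitz constant in the first bracket and the dominating function in the source term would both degenerate. The hypothesis (1.7) is precisely what makes the Lipschitz constant in Lemma 2.1 depend only on $\|f'\|_{L^{\infty}}$, and it produces the linear dominator $C|u_{\lambda _{0}}|$; this is the only place where the superlinear case genuinely requires (1.7) instead of being handled by the same argument as finite $\lambda _{0}$.
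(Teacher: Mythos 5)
Your plan is correct, but it follows a genuinely different route from the paper. The paper never compares $u_{\lambda_k}$ directly with the limit trajectory: it first extracts weak-star convergent subsequences from the uniform energy bound, passes to the limit in the equation using local compactness ($H^1\hookrightarrow L^2$ on bounded cylinders, a.e. convergence, weak $L^2$ convergence of the nonlinearity) and uniqueness to identify the weak limit as $S^{\lambda_0}(t)(u_0,u_1)$, and then proves strong convergence by showing $\{(u_k(t),u_{kt}(t))\}$ is Cauchy: it multiplies the equation for $u_k-u_m$ by $2(u_{kt}-u_{mt})$ and controls the nonlinear difference through the iterated $\limsup$ inequality (2.1) of Lemma 2.1, closing the estimate with reverse Fatou and Gronwall applied to the $\limsup$ quantities. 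You instead run a direct continuous-dependence argument for $v_k=u_{\lambda_k}-u_{\lambda_0}$, splitting the nonlinearity into a term handled by the uniform local Lipschitz bound (the paper's (2.3)) and a "source" term $\Phi_{\lambda_k}(u_{\lambda_0})-\Phi_{\lambda_0}(u_{\lambda_0})$ killed by dominated convergence (the Cases 1--3 analysis of Lemma 2.1 applied to the fixed function $u_{\lambda_0}(\tau)$), then Gronwall. Your version is more elementary: it avoids the weak-compactness/identification step and the $\limsup$--reverse-Fatou bookkeeping, because the strong convergence of the data gives you a known limit trajectory to compare against; the paper's Cauchy-type formulation, on the other hand, is tailored to be reused in Lemma 2.3, where no limiting solution is available and only weak convergence of translated trajectories is at hand. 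Two small points to tidy up when writing this out: the energy identity for $v_k$ should be justified for weak solutions by the same density argument the paper invokes for (2.11); and in the finite-$\lambda_0$ case your $\tau$-uniform dominator for the source term should read $C\bigl(\Vert u_{\lambda_0}(\tau)\Vert_{H^2}^{2}+\Vert u_{\lambda_0}(\tau)\Vert_{H^2}^{2p}\bigr)$, which is still bounded on $[0,T]$ since $u_{\lambda_0}\in C([0,T];H^{2}(\mathbb{R}^{n}))$, so the conclusion is unaffected.
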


\begin{proof}
We will establish the following estimates for smooth solutions of (2.9) with
initial data in $H^{4}(%
\mathbb{R}
^{n})\times H^{2}(%
\mathbb{R}
^{n}),~$for which the estimates below are justified. The estimates can be
extended to the weak solutions with initial data in $H^{2}(%
\mathbb{R}
^{n})\times L^{2}(%
\mathbb{R}
^{n})$ by standard density arguments. Let $(u_{k}(t),u_{kt}(t))=S^{\lambda
_{k}}(t)(u_{0k},u_{1k})$. Putting $u_{k}(t)$ and $\lambda _{k}$ instead of $%
u_{\lambda }(t)$ and $\lambda $, respectively, multiplying the obtained
equation by $2u_{kt}$, integrating over $\left( 0,t\right) \times
\mathbb{R}
^{n}$ and taking into account (1.5), we find%
\begin{equation*}
\left\Vert u_{kt}\left( t\right) \right\Vert _{L^{2}(%
\mathbb{R}
^{n})}^{2}+\left\Vert {\small \Delta }u_{k}\left( t\right) \right\Vert
_{L^{2}(%
\mathbb{R}
^{n})}^{2}+\alpha \left\Vert u_{k}\left( t\right) \right\Vert _{L^{2}(%
\mathbb{R}
^{n})}^{2}+\frac{2}{\lambda _{k}^{2}}\int\nolimits_{%
\mathbb{R}
^{n}}F\left( \lambda _{k}u_{k}(t,x)\right) dx+
\end{equation*}%
\begin{equation*}
\text{ }+\int\nolimits_{0}^{t}\int\nolimits_{%
\mathbb{R}
^{n}}a\left( x\right) \left\vert u_{kt}\left( t,x\right) \right\vert
^{2}dx=\left\Vert u_{kt}\left( 0\right) \right\Vert _{L^{2}(%
\mathbb{R}
^{n})}^{2}\text{\ }+\left\Vert {\small \Delta }u_{k}\left( 0\right)
\right\Vert _{L^{2}(%
\mathbb{R}
^{n})}^{2}+\alpha \left\Vert u_{k}\left( 0\right) \right\Vert _{L^{2}(%
\mathbb{R}
^{n})}^{2}
\end{equation*}%
\begin{equation}
+\frac{2}{\lambda _{k}^{2}}\int\nolimits_{%
\mathbb{R}
^{n}}F\left( \lambda _{k}u_{k}(0,x)\right) dx\leq C\left( \left\Vert
u_{1k}\right\Vert _{L^{2}(%
\mathbb{R}
^{n})}^{2}+\left\Vert u_{0k}\right\Vert _{H^{2}(%
\mathbb{R}
^{n})}^{2}+\left\Vert u_{0k}\right\Vert _{H^{2}(%
\mathbb{R}
^{n})}^{p+1}\right) \text{,}  \tag{2.11}
\end{equation}%
where the positive constant $C$ depends on $\underset{k}{\sup }\lambda _{k}$%
. If, additionally, condition (1.7) is satisfied, then the constant $C$
depends only on $\left\Vert f^{\prime }\right\Vert _{L^{\infty }\left(
\mathbb{R}
\right) }$. By using (1.3) and (1.6), we get%
\begin{equation}
\left\Vert u_{kt}\left( t\right) \right\Vert _{L^{2}(%
\mathbb{R}
^{n})}^{2}+\left\Vert u_{k}\left( t\right) \right\Vert _{H^{2}(%
\mathbb{R}
^{n})}^{2}\leq C\left( \left\Vert u_{1k}\right\Vert _{L^{2}(%
\mathbb{R}
^{n})}^{2}+\left\Vert u_{0k}\right\Vert _{H^{2}(%
\mathbb{R}
^{n})}^{2}+\left\Vert u_{0k}\right\Vert _{H^{2}(%
\mathbb{R}
^{n})}^{p+1}\right) \text{.}  \tag{2.12}
\end{equation}%
Since $\left\{ (u_{0k},u_{1k})\right\} _{k=1}^{\infty }$ is convergent, it
is bounded. So $\left\{ u_{k}\right\} _{k=1}^{\infty }$ is bounded in $%
L^{\infty }(0,\infty ;H^{2}(%
\mathbb{R}
^{n}))$ and $\left\{ u_{kt}\right\} _{k=1}^{\infty }$ is bounded in $%
L^{\infty }(0,\infty ;L^{2}(%
\mathbb{R}
^{n})).$ Then, by Banach-Alaoglu theorem, there exist subsequences $\left\{
u_{k_{m}}\right\} _{m=1}^{\infty }$ and $\left\{ u_{k_{m}t}\right\}
_{m=1}^{\infty }$ such that%
\begin{equation}
\left\{
\begin{array}{c}
u_{k_{m}}\rightarrow u\text{ weakly star in \ }L^{\infty }(0,\infty ;H^{2}(%
\mathbb{R}
^{n})) \\
u_{k_{m}t}\rightarrow u_{t}\text{ weakly star in \ }L^{\infty }(0,\infty
;L^{2}(%
\mathbb{R}
^{n}))%
\end{array}%
\right. \text{,}  \tag{2.13}
\end{equation}%
which yields the boundedness of the sequence $\left\{ u_{k_{m}}\right\}
_{m=1}^{\infty }$ in $H^{1}\left( \left( 0,\infty \right) \times
\mathbb{R}
^{n}\right) $. Then for any $r>0$ and $T>0$, by using the compact embedding $%
H^{1}\left( \left( 0,T\right) \times B(0,r)\right) \hookrightarrow
L^{2}\left( \left( 0,T\right) \times B(0,r)\right) $, we have%
\begin{equation*}
u_{k_{m}}\rightarrow u\text{ strongly in }L^{2}\left( \left( 0,T\right)
\times B(0,r)\right) \text{,}
\end{equation*}%
where, $B(0,r)=\{x\in
\mathbb{R}
^{n}:\left\vert x\right\vert <r\}$. Hence, there exists a subsequence $%
\left\{ u_{k_{m_{l}}}\right\} _{l=1}^{\infty }\subset \left\{
u_{k_{m}}\right\} _{m=1}^{\infty }$ such that $u_{k_{m_{l}}}(t,x)\rightarrow
u(t,x)$ a.e. in $\left( 0,T\right) \times B(0,r)$. Then, by using the same
arguments in previous lemma, we obtain%
\begin{equation*}
\frac{1}{\lambda _{k_{m_{l}}}}f(\lambda
_{k_{m_{l}}}u_{k_{m_{l}}}(t,x))\rightarrow \Phi _{\lambda _{0}}(u(t,x))\text{
a.e. in }\left( 0,T\right) \times B(0,r)
\end{equation*}%
and, since, by (1.5), the sequence $\left\{ \frac{1}{\lambda _{k_{m}}}%
f(\lambda _{k_{m}}u_{k_{m}})\right\} _{m=1}^{\infty }$ is bounded in $%
L^{2}((0,\infty )\times
\mathbb{R}
^{n})$, we get
\begin{equation}
\frac{1}{\lambda _{k_{m_{l}}}}f(\lambda
_{k_{m_{l}}}u_{k_{m_{l}}})\rightarrow \Phi _{\lambda _{0}}(u)\text{ weakly
in }L^{2}\left( \left( 0,T\right) \times B(0,r)\right) \text{.}  \tag{2.14}
\end{equation}%
Furthermore, by (2.9)$_{1}$ and (2.12)-(2.14), the sequence $\left\{
u_{k_{m_{l}}tt}\right\} _{l=1}^{\infty }$ is bounded in $L^{\infty
}(0,\infty ;H^{-2}(%
\mathbb{R}
^{n}))$, so we have%
\begin{equation}
u_{k_{m_{l}}tt}\rightarrow u_{tt}\text{ weakly star in \ }L^{\infty
}(0,\infty ;H^{-2}(%
\mathbb{R}
^{n}))\text{.}  \tag{2.15}
\end{equation}%
From (2.12)-(2.15), we obtain that $u(t,x)$ is a solution of problem (2.9).
By the uniqueness of solutions, we deduce%
\begin{equation*}
S^{\lambda _{k_{m_{l}}}}(t)(u_{0k_{m_{l}}},u_{1k_{m_{l}}})\rightarrow
S^{\lambda _{0}}(t)(u_{0},u_{1})\text{ weakly in }H^{2}(%
\mathbb{R}
^{n})\times L^{2}(%
\mathbb{R}
^{n})\text{.}
\end{equation*}%
Similarly, one can show that every subsequence of $\left\{ u_{k}\right\}
_{k=1}^{\infty }$ has a further subsequence which is weakly convergent to $u$%
. It means that%
\begin{equation}
S^{\lambda _{k}}(t)(u_{0k},u_{1k})\rightarrow S^{\lambda
_{0}}(t)(u_{0},u_{1})\text{ weakly in }H^{2}(%
\mathbb{R}
^{n})\times L^{2}(%
\mathbb{R}
^{n})\text{.}  \tag{2.16}
\end{equation}%
Multiplying the equation%
\begin{equation*}
u_{ktt}-u_{mtt}+\Delta ^{2}\left( u_{k}-u_{m}\right) +a\left( x\right)
\left( u_{kt}-u_{mt}\right) +\alpha \left( u_{k}-u_{m}\right) +\frac{1}{%
\lambda _{k}}f\left( \lambda _{k}u_{k}\right) -\frac{1}{\lambda _{m}}f\left(
\lambda _{m}u_{m}\right) =0
\end{equation*}%
by $2(u_{kt}-u_{mt})$, integrating over $\left( 0,t\right) \times
\mathbb{R}
^{n}$ and considering (1.3), we have%
\begin{eqnarray*}
&&\left\Vert u_{kt}\left( t\right) -u_{mt}\left( t\right) \right\Vert
_{L^{2}(%
\mathbb{R}
^{n})}^{2}+\left\Vert \Delta u_{k}\left( t\right) -\Delta u_{m}\left(
t\right) \right\Vert _{L^{2}(%
\mathbb{R}
^{n})}^{2}+\alpha \left\Vert u_{k}\left( t\right) -u_{m}\left( t\right)
\right\Vert _{L^{2}(%
\mathbb{R}
^{n})}^{2} \\
&\leq &\left\Vert u_{1k}-u_{1m}\right\Vert _{L^{2}(%
\mathbb{R}
^{n})}^{2}+\left\Vert \Delta u_{0k}-\Delta u_{0m}\right\Vert _{L^{2}(%
\mathbb{R}
^{n})}^{2}+\alpha \left\Vert u_{0k}-u_{0m}\right\Vert _{L^{2}(%
\mathbb{R}
^{n})}^{2} \\
&&+\int\nolimits_{0}^{t}\left( \left\Vert u_{kt}(s)-u_{mt}(s)\right\Vert
_{L^{2}(%
\mathbb{R}
^{n})}^{2}+\left\Vert \frac{1}{\lambda _{k}}f(\lambda _{k}u_{k}(s))-\frac{1}{%
\lambda _{m}}f(\lambda _{m}u_{m}(s))\right\Vert _{L^{2}(%
\mathbb{R}
^{n})}^{2}\right) ds\text{.}
\end{eqnarray*}%
From above inequality and previous lemma, we obtain%
\begin{eqnarray*}
&&\underset{m\rightarrow \infty }{\lim \sup }\underset{k\rightarrow \infty }{%
\lim \sup }\left( \left\Vert u_{kt}(t)-u_{mt}(t)\right\Vert _{L^{2}(%
\mathbb{R}
^{n})}^{2}+\left\Vert u_{k}(t)-u_{m}(t)\right\Vert _{H^{2}(%
\mathbb{R}
^{n})}^{2}\right) \\
&\leq &C\underset{m\rightarrow \infty }{\lim \sup }\underset{k\rightarrow
\infty }{\lim \sup }\int\nolimits_{0}^{t}\left( \left\Vert
u_{kt}(s)-u_{mt}(s)\right\Vert _{L^{2}(%
\mathbb{R}
^{n})}^{2}+\left\Vert u_{k}(s)-u_{m}(s)\right\Vert _{H^{2}(%
\mathbb{R}
^{n})}^{2}\right) ds\text{.}
\end{eqnarray*}%
Since $\left\{ \left( u_{k},u_{kt}\right) \right\} _{k=1}^{\infty }$ is
bounded in $L^{\infty }(0,\infty ;H^{2}\left(
\mathbb{R}
^{n}\right) \times L^{2}\left(
\mathbb{R}
^{n}\right) )$, by reverse Fatou's lemma, we get
\begin{eqnarray*}
&&\underset{m\rightarrow \infty }{\lim \sup }\underset{k\rightarrow \infty }{%
\lim \sup }\left( \left\Vert u_{kt}(t)-u_{mt}(t)\right\Vert _{L^{2}(%
\mathbb{R}
^{n})}^{2}+\left\Vert u_{k}(t)-u_{m}(t)\right\Vert _{H^{2}(%
\mathbb{R}
^{n})}^{2}\right) \\
&\leq &C\int\nolimits_{0}^{t}\underset{m\rightarrow \infty }{\lim \sup }%
\underset{k\rightarrow \infty }{\lim \sup }\left( \left\Vert
u_{kt}(s)-u_{mt}(s)\right\Vert _{L^{2}(%
\mathbb{R}
^{n})}^{2}+\left\Vert u_{k}(s)-u_{m}(s)\right\Vert _{H^{2}(%
\mathbb{R}
^{n})}^{2}\right) ds\text{.}
\end{eqnarray*}%
Hence, by Gronwall's inequality,%
\begin{equation*}
\underset{m\rightarrow \infty }{\lim \sup }\underset{k\rightarrow \infty }{%
\lim \sup }\left( \left\Vert u_{kt}(t)-u_{mt}(t)\right\Vert _{L^{2}(%
\mathbb{R}
^{n})}^{2}+\left\Vert u_{k}(t)-u_{m}(t)\right\Vert _{H^{2}(%
\mathbb{R}
^{n})}^{2}\right) =0\text{.}
\end{equation*}%
So, $\left\{ \left( u_{k}\left( t\right) ,u_{kt}\left( t\right) \right)
\right\} _{k=1}^{\infty }$ is a Cauchy \ subsequence in $H^{2}\left(
\mathbb{R}
^{n}\right) \times L^{2}\left(
\mathbb{R}
^{n}\right) $, which together with (2.16) yields (2.10).
\end{proof}

\begin{lemma}
Assume that conditions (1.3)-(1.6) hold and $B$ is a bounded subset of$\
H^{2}\left(
\mathbb{R}
^{n}\right) \times L^{2}\left(
\mathbb{R}
^{n}\right) $. Then for every \ $M>0$ the sequence of the form $\left\{
S^{\lambda _{k}}(t_{k})\varphi _{k}\right\} _{k=1}^{\infty }$, $\left\{
\varphi _{k}\right\} _{k=1}^{\infty }\subset B$, $t_{k}\rightarrow \infty $,
$\left\{ \lambda _{k}\right\} _{k=1}^{\infty }\subset \left( 0,M\right] $,
is relatively compact in $H^{2}\left(
\mathbb{R}
^{n}\right) \times L^{2}\left(
\mathbb{R}
^{n}\right) $. Furthermore, if, additionally, condition (1.7) is satisfied,
then the sequence of the form $\left\{ S^{\lambda _{k}}(t_{k})\varphi
_{k}\right\} _{k=1}^{\infty }$, $\left\{ \varphi _{k}\right\} _{k=1}^{\infty
}\subset B$, $t_{k}\rightarrow \infty $, $\lambda _{k}\rightarrow \infty $,
is also relatively compact in $H^{2}\left(
\mathbb{R}
^{n}\right) \times L^{2}\left(
\mathbb{R}
^{n}\right) $.
\end{lemma}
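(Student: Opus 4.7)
The plan is to establish asymptotic compactness by combining a backward time-shift with a multiplier/cutoff contraction argument on the difference of two trajectories. Throughout I would pass to a subsequence so that $\lambda_k \to \lambda_0$ in $[0,M]$ (respectively in $[0,\infty]$ under \textup{(1.7)}).

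First, the energy identity behind (2.11), together with (1.3) and (1.6), yields
\[
\sup_{t \geq 0,\,k} \|S^{\lambda_k}(t)\varphi_k\|_{H^2(\mathbb{R}^n) \times L^2(\mathbb{R}^n)} \leq R,
\]
where $R$ depends on $\sup_{\varphi \in B}\|\varphi\|$ and on $M$; under \textup{(1.7)}, $R$ depends only on $\sup_{B}\|\cdot\|$ and $\|f'\|_{L^{\infty}}$, which is what accommodates $\lambda_k \to \infty$. Fix $T > 0$ large and, for $k$ with $t_k > T$, set $\psi_k := S^{\lambda_k}(t_k - T)\varphi_k$ and $(w_k, w_{kt}) := S^{\lambda_k}(\cdot)\psi_k$ on $[0,T]$. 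Along a further subsequence, $\psi_k \rightharpoonup \psi_*$ weakly in $H^2 \times L^2$; repeating the weak-limit argument of Lemma 2.2 verbatim (invoking Lemma 2.1 and the compact embedding $H^1((0,T) \times B(0,r)) \hookrightarrow L^2((0,T) \times B(0,r))$ to pass to the limit in the nonlinearity), one identifies $w_k(T) = S^{\lambda_k}(t_k)\varphi_k \rightharpoonup S^{\lambda_0}(T)\psi_*$ weakly.

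The core step is to upgrade this weak convergence to strong. I would test the equation for $v_{k,m} := w_k - w_m$ by $2(w_{kt}-w_{mt})$ and integrate over $(0,T) \times \mathbb{R}^n$, obtaining via (1.3)
\[
\|(v_{k,m}(T), v_{k,m,t}(T))\|_{H^2 \times L^2}^2 + 2\!\int_0^T\!\!\int_{\mathbb{R}^n}\! a(x)|v_{k,m,t}|^2 = \|\psi_k - \psi_m\|_{H^2 \times L^2}^2 - 2\!\int_0^T\!\!\int_{\mathbb{R}^n}\!\bigl(\Phi_{\lambda_k}(w_k)-\Phi_{\lambda_m}(w_m)\bigr)\,v_{k,m,t}.
\]
Taking $\limsup_m \limsup_k$, the nonlinear integrand vanishes by Lemma 2.1 together with the local strong convergence of $w_k$ on $(0,T)\times B(0,r)$. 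Supplementing the identity with the cutoff multiplier $\eta(x)\,v_{k,m}$, where $\eta$ is smooth with $\eta \equiv 1$ outside $B(0,r_0+1)$, hypothesis (1.4) converts the damping control of $v_{k,m,t}$ on $\{|x|\geq r_0\}$ into $H^2$-control of $v_{k,m}(T)$ on that exterior; the interior piece on $B(0,r_0+1)$ is absorbed by the compact embedding $H^2(B(0,r_0+1)) \hookrightarrow L^2(B(0,r_0+1))$. For $T$ chosen large enough this produces the contraction
\[
\limsup_{m\to\infty}\limsup_{k\to\infty}\|w_k(T)-w_m(T)\|_{H^2\times L^2}^2 \leq \theta\,\limsup_{m\to\infty}\limsup_{k\to\infty}\|\psi_k - \psi_m\|_{H^2 \times L^2}^2
\]
for some $\theta \in (0,1)$; a diagonal iteration along the shifts $t_k,\,t_k-T,\,t_k-2T,\ldots$, combined with the uniform bound $R$, drives the right-hand side to zero and shows that $\{S^{\lambda_k}(t_k)\varphi_k\}$ is Cauchy in $H^2 \times L^2$.

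The main obstacle is the loss of compactness on $\mathbb{R}^n$: weak convergence of $\psi_k$ does not by itself produce strong convergence of $w_k(T)$. Hypothesis (1.4) localizes the damping only in the exterior, and for the plate equation one does not have finite propagation speed, so the exterior dissipation must be transferred to $H^2$-control of $v_{k,m}(T)$ by carefully choosing the cutoff multiplier $\eta v_{k,m}$. This merging of exterior damping with interior local compactness via the cutoff, together with Lemma 2.1 handling the nonlinearity uniformly in $\lambda_k$ (including the singular cases $\lambda_k \to 0$ and, under \textup{(1.7)}, $\lambda_k \to \infty$), is the delicate analytic step and the one that departs from Zuazua's wave-equation argument.
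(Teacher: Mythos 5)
Your overall architecture (uniform bound, backward shift $\psi_k=S^{\lambda_k}(t_k-T)\varphi_k$, energy identity for the difference of two trajectories, exploiting the exterior damping (1.4) plus local compactness) matches the paper's strategy, but two steps at the heart of your argument are not justified and, as stated, would fail. First, you assert that after taking $\limsup_m\limsup_k$ the nonlinear term $\int_0^T\!\int_{\mathbb{R}^n}\bigl(\Phi_{\lambda_k}(w_k)-\Phi_{\lambda_m}(w_m)\bigr)v_{k,m,t}$ vanishes "by Lemma 2.1 together with local strong convergence." Lemma 2.1 only bounds the nonlinear difference by $\limsup\limsup\|w_k-w_m\|_{H^2}$ — exactly the quantity you are trying to show is small — and local strong convergence says nothing about the tails on $\mathbb{R}^n\setminus B(0,r)$, since $v_{k,m,t}$ converges only weakly. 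In the paper this term does \emph{not} vanish: it is rewritten through the primitives $\frac{1}{\lambda^2}F(\lambda v)$, handled via the sign condition (1.6), the convergences (2.23) and the exterior space–time bound (2.19), and the outcome is an estimate of size $c_8\left(1+\frac{T}{r}\right)$, which only becomes negligible after the time-averaging step (integrating the energy identity in $t$ over $(0,T)$ and dividing by $T$), a step absent from your plan.

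Second, your mechanism for controlling the interior part of the difference energy is inadequate. The multiplier $\eta\,v_{k,m}$ with $\eta\equiv 1$ outside $B(0,r_0+1)$ yields an equipartition-type identity trading $\int\eta|\Delta v_{k,m}|^2$ against $\int\eta|v_{k,m,t}|^2$ plus commutator and boundary terms; it gives no signed control of the interior kinetic energy $\int_0^T\!\int_{B(0,r_0+1)}|v_{k,m,t}|^2$, and the compact embedding $H^2(B(0,r_0+1))\hookrightarrow L^2(B(0,r_0+1))$ can absorb only lower-order quantities like $\|v_{k,m}\|_{L^2}$, not the bending or kinetic energy. This is precisely the obstruction that, in Zuazua's wave-equation setting, is resolved by unique continuation and that the paper circumvents differently: it applies the Morawetz/Pohozaev-type multiplier $\sum_i x_i(1-\eta_{2r})(v_m-v_l)_{x_i}+\frac{n-1}{2}(1-\eta_{2r})(v_m-v_l)$ to the \emph{difference} equation, so that the interior space–time bending and kinetic energies appear with good signs, while the error terms involve only first-order derivatives and are handled by the relative compactness of $\{v_m\}$ in $C([0,T];H^1(B(0,r)))$ (Arzelà–Ascoli) together with the exterior estimates (2.18)–(2.19). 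Without an estimate of this observability type, your claimed contraction with factor $\theta\in(0,1)$ is not established, so the iteration along the shifts has nothing to iterate; note also that the paper closes the argument not by a geometric contraction but by the additive bound $c_9\left(\frac{1}{T}+\frac{1}{r}\right)$ with $T=T_0$ and $r$ large, followed by the subsequence argument of \cite[Lemma 3.4]{19}.
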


\begin{proof}
Since $\left\{ \varphi _{k}\right\} _{k=1}^{\infty }$ is bounded in $%
H^{2}\left(
\mathbb{R}
^{n}\right) \times L^{2}\left(
\mathbb{R}
^{n}\right) $, under conditions of lemma, from (2.12) it follows that the
sequence $\left\{ S^{\lambda _{k}}\left( .\right) \varphi _{k}\right\}
_{k=1}^{\infty }$ is bounded in $C_{b}\left( 0,\infty ;H^{2}\left(
\mathbb{R}
^{n}\right) \times L^{2}\left(
\mathbb{R}
^{n}\right) \right) $, where $C_{b}\left( 0,\infty ;H^{2}\left(
\mathbb{R}
^{n}\right) \times L^{2}\left(
\mathbb{R}
^{n}\right) \right) $ is the space of continuously bounded functions from $%
\left[ 0,\infty \right) $ to $H^{2}\left(
\mathbb{R}
^{n}\right) \times L^{2}\left(
\mathbb{R}
^{n}\right) $. Then for any $T_{0}\geq 0$ \ there exists a subsequence $%
\left\{ k_{m}\right\} _{m=1}^{\infty }$ such that $t_{k_{m}}\geq T_{0}$, and
\begin{equation}
\left\{
\begin{array}{c}
\lambda _{k_{m}}\rightarrow \lambda _{0}\text{ in }\overline{%
\mathbb{R}
}\text{,} \\
S^{\lambda _{k_{m}}}(t_{k_{m}}-T_{0})\varphi _{k_{m}}\rightarrow \varphi _{0}%
\text{ weakly in }H^{2}\left(
\mathbb{R}
^{n}\right) \times L^{2}\left(
\mathbb{R}
^{n}\right) \text{,} \\
v_{m}\rightarrow v\text{ weakly star in }L^{\infty }\left( 0,\infty
;H^{2}\left(
\mathbb{R}
^{n}\right) \right) \text{,} \\
v_{mt}\rightarrow v_{t}\text{ weakly star in }L^{\infty }\left( 0,\infty
;L^{2}\left(
\mathbb{R}
^{n}\right) \right) \text{,} \\
v_{m}(t)\rightarrow v(t)\text{ weakly in }H^{2}\left(
\mathbb{R}
^{n}\right) \text{, }\forall t\geq 0\text{,}%
\end{array}%
\right.   \tag{2.17}
\end{equation}%
for some $\lambda _{0}\in \left[ 0,\infty \right] $, $\varphi _{0}\in
H^{2}\left(
\mathbb{R}
^{n}\right) \times L^{2}\left(
\mathbb{R}
^{n}\right) $ and $v\in L^{\infty }\left( 0,\infty ;H^{2}\left(
\mathbb{R}
^{n}\right) \right) \cap W^{1,\infty }\left( 0,\infty ;L^{2}\left(
\mathbb{R}
^{n}\right) \right) $, where $\left( v_{m}(t\right) ,v_{mt}\left( t\right)
)=S^{\lambda _{k_{m}}}(t+t_{k_{m}}-T_{0})\varphi _{k_{m}}$ and $\overline{%
\mathbb{R}
}$ is the extended set of real numbers. \newline
Taking into account (2.11), we get%
\begin{equation}
\int\nolimits_{0}^{T}\left\Vert v_{mt}(t)\right\Vert _{L^{2}\left(
\mathbb{R}
^{n}\smallsetminus B\left( 0,r_{0}\right) \right) }^{2}dt\leq c_{1}\text{, }%
\forall T\geq 0\text{.}  \tag{2.18}
\end{equation}%
By (2.9)$_{1}$, we have%
\begin{equation*}
v_{mtt}+\Delta ^{2}v_{m}+a(x)v_{mt}+\alpha v_{m}+\frac{1}{\lambda _{k_{m}}}%
f\left( \lambda _{k_{m}}v_{m}\right) =0\text{.}
\end{equation*}%
Let $\eta \in C^{\infty }\left(
\mathbb{R}
^{n}\right) $, $0\leq \eta \left( x\right) \leq 1$, $\eta \left( x\right)
=\left\{
\begin{array}{c}
0,\text{ }\left\vert x\right\vert \leq 1 \\
1,\text{ }\left\vert x\right\vert \geq 2%
\end{array}%
\right. $ and $\eta _{r}\left( x\right) =\eta \left( \frac{x}{r}\right) $.
Multiplying above equation by $\eta _{r}^{2}v_{m}$, integrating over $\left(
0,T\right) \times
\mathbb{R}
^{n}$ and taking into account (1.6), we find%
\begin{equation*}
\int\nolimits_{0}^{T}\left( \left\Vert \eta _{r}\Delta v_{m}(t)\right\Vert
_{L^{2}\left(
\mathbb{R}
^{n}\right) }^{2}+\alpha \left\Vert \eta _{r}v_{m}(t)\right\Vert
_{L^{2}\left(
\mathbb{R}
^{n}\right) }^{2}\right) dt
\end{equation*}%
\begin{equation*}
\leq \int\nolimits_{0}^{T}\left\Vert \eta _{r}v_{mt}(t)\right\Vert
_{L^{2}\left(
\mathbb{R}
^{n}\right) }^{2}dt-\left. \left( \int\nolimits_{%
\mathbb{R}
^{n}}\eta _{r}^{2}\left( x\right) v_{mt}\left( t,x\right) v_{m}\left(
t,x\right) dx\right) \right\vert _{0}^{T}
\end{equation*}%
\begin{equation*}
-\frac{4}{r}\sum\nolimits_{i=1}^{n}\int\nolimits_{0}^{T}\int\nolimits_{%
\mathbb{R}
^{n}}\eta _{r}\left( x\right) \eta _{x_{i}}(\frac{x}{r})\Delta v_{m}\left(
t,x\right) v_{mx_{i}}\left( t,x\right) dxdt
\end{equation*}%
\begin{equation*}
-\int\nolimits_{0}^{T}\int\nolimits_{%
\mathbb{R}
^{n}}\Delta \left( \eta _{r}^{2}\left( x\right) \right) \Delta v_{m}\left(
t,x\right) v_{m}\left( t,x\right) dxdt-\frac{1}{2}\left. \left(
\int\nolimits_{%
\mathbb{R}
^{n}}\eta _{r}^{2}\left( x\right) a\left( x\right) v_{m}^{2}\left(
t,x\right) dx\right) \right\vert _{0}^{T}\text{.}
\end{equation*}%
Considering (2.17) and (2.18), we get%
\begin{equation*}
\underset{m\rightarrow \infty }{\lim \sup }\int\nolimits_{0}^{T}\left(
\left\Vert \eta _{r}\Delta v_{m}(t)\right\Vert _{L^{2}\left(
\mathbb{R}
^{n}\right) }^{2}+\alpha \left\Vert \eta _{r}v_{m}(t)\right\Vert
_{L^{2}\left(
\mathbb{R}
^{n}\right) }^{2}\right) dt
\end{equation*}%
\begin{equation}
\leq c_{2}\left( 1+\frac{T}{r}\right) \text{, }\forall T\geq 0\text{ and }%
\forall r\geq r_{0}\text{.}  \tag{2.19}
\end{equation}%
By (2.9)$_{1}$, we also have%
\begin{equation*}
v_{mtt}-v_{ltt}+\Delta ^{2}\left( v_{m}-v_{l}\right) +a(x)\left(
v_{mt}-v_{lt}\right) +\alpha \left( v_{m}-v_{l}\right)
\end{equation*}%
\begin{equation}
+\frac{1}{\lambda _{k_{m}}}f\left( \lambda _{k_{m}}v_{m}\right) -\frac{1}{%
\lambda _{k_{l}}}f\left( \lambda _{k_{l}}v_{l}\right) =0\text{.}  \tag{2.20}
\end{equation}%
Now, multiplying (2.20) by $\sum\nolimits_{i=1}^{n}x_{i}\left( 1-\eta
_{2r}\right) \left( v_{m}-v_{l}\right) _{x_{i}}+\frac{1}{2}(n-1)\left(
1-\eta _{2r}\right) \left( v_{m}-v_{l}\right) $, and integrating over $%
\left( 0,T\right) \times
\mathbb{R}
^{n}$, we obtain%
\begin{equation*}
\frac{3}{2}\int\nolimits_{0}^{T}\left\Vert \Delta \left( v_{m}\left(
t\right) -v_{l}\left( t\right) \right) \right\Vert _{L^{2}\left( B\left(
0,2r\right) \right) }^{2}dt+\frac{1}{2}\int\nolimits_{0}^{T}\left\Vert
v_{mt}\left( t\right) -v_{lt}\left( t\right) \right\Vert _{L^{2}\left(
B\left( 0,2r\right) \right) }^{2}dt
\end{equation*}%
\begin{equation*}
\leq \left\vert \sum\nolimits_{i=1}^{n}\left( \int\nolimits_{B\left(
0,4r\right) }\left( 1-\eta _{2r}\left( x\right) \right) x_{i}\left(
v_{m}(T,x)-v_{l}(T,x)\right) _{x_{i}}\left( v_{mt}(T,x)-v_{lt}(T,x)\right)
dx\right) \right\vert
\end{equation*}%
\begin{equation*}
+\left\vert \sum\nolimits_{i=1}^{n}\left( \int\nolimits_{B\left( 0,4r\right)
}\left( 1-\eta _{2r}\left( x\right) \right) x_{i}\left(
v_{m}(0,x)-v_{l}(0,x)\right) _{x_{i}}\left( v_{mt}(0,x)-v_{lt}(0,x)\right)
dx\right) \right\vert
\end{equation*}%
\begin{equation*}
+\frac{1}{2}(n-1)\left\vert \left( \int\nolimits_{B\left( 0,4r\right)
}\left( 1-\eta _{2r}\left( x\right) \right) \left(
v_{mt}(T,x)-v_{lt}(T,x)\right) \left( v_{m}(T,x)-v_{l}(T,x)\right) dx\right)
\right\vert
\end{equation*}%
\begin{equation*}
+\frac{1}{2}(n-1)\left\vert \left( \int\nolimits_{B\left( 0,4r\right)
}\left( 1-\eta _{2r}\left( x\right) \right) \left(
v_{mt}(0,x)-v_{lt}(0,x)\right) \left( v_{m}(0,x)-v_{l}(0,x)\right) dx\right)
\right\vert
\end{equation*}%
\begin{equation*}
+\frac{1}{4r}\left\vert
\sum\nolimits_{i,j=1}^{n}\int\nolimits_{0}^{T}\int\nolimits_{B\left(
0,4r\right) \backslash B\left( 0,2r\right) }\eta _{x_{i}}\left( \frac{x}{2r}%
\right) x_{i}(v_{mt}\left( t,x\right) -v_{lt}\left( t,x\right)
)^{2}dxdt\right\vert
\end{equation*}%
\begin{equation*}
+\frac{1}{4r}\left\vert
\sum\nolimits_{i,j=1}^{n}\int\nolimits_{0}^{T}\int\nolimits_{B\left(
0,4r\right) \backslash B\left( 0,2r\right) }\eta _{x_{i}}\left( \frac{x}{2r}%
\right) x_{i}(\Delta v_{m}\left( t,x\right) -\Delta v_{l}\left( t,x\right)
)^{2}dxdt\right\vert
\end{equation*}%
\begin{equation*}
+\left\vert
\sum\nolimits_{i=1}^{n}\int\nolimits_{0}^{T}\int\nolimits_{B\left(
0,4r\right) }\Delta \left( \left( 1-\eta _{2r}\left( x\right) \right)
x_{i}\right) \left( v_{m}\left( t,x\right) -v_{l}\left( t,x\right) \right)
_{x_{i}}\Delta \left( v_{m}\left( t,x\right) -v_{l}\left( t,x\right) \right)
dxdt\right\vert
\end{equation*}%
\begin{equation*}
+\frac{1}{r}\left\vert
\sum\nolimits_{i,j=1}^{n}\int\nolimits_{0}^{T}\int\nolimits_{B\left(
0,4r\right) \backslash B\left( 0,2r\right) }\eta _{x_{j}}\left( \frac{x}{2r}%
\right) x_{i}\left( v_{m}\left( t,x\right) -v_{l}\left( t,x\right) \right)
_{x_{i}x_{j}}\Delta \left( v_{m}\left( t,x\right) -v_{l}\left( t,x\right)
\right) dxdt\right\vert
\end{equation*}%
\begin{equation*}
+\frac{1}{2}(n-1)\left\vert \int\nolimits_{0}^{T}\int\nolimits_{B\left(
0,4r\right) }\Delta \left( 1-\eta _{2r}\left( x\right) \right) \left(
v_{m}\left( t,x\right) -v_{l}\left( t,x\right) \right) \Delta \left(
v_{m}\left( t,x\right) -v_{l}\left( t,x\right) \right) dxdt\right\vert
\end{equation*}%
\begin{equation*}
+\frac{1}{2r}(n-1)\left\vert
\sum\nolimits_{i=1}^{n}\int\nolimits_{0}^{T}\int\nolimits_{B\left(
0,4r\right) }\eta _{x_{i}}\left( \frac{x}{2r}\right) \left( v_{m}\left(
t,x\right) -v_{l}\left( t,x\right) \right) _{x_{i}}\Delta \left( v_{m}\left(
t,x\right) -v_{l}\left( t,x\right) \right) dxdt\right\vert
\end{equation*}%
\begin{equation*}
+\left\vert
\sum\nolimits_{i=1}^{n}\int\nolimits_{0}^{T}\int\nolimits_{B\left(
0,4r\right) }\left( 1-\eta _{2r}\left( x\right) \right) x_{i}\left(
v_{m}\left( t,x\right) -v_{l}\left( t,x\right) \right) _{x_{i}}a\left(
x\right) \left( v_{mt}\left( t,x\right) -v_{lt}\left( t,x\right) \right)
dxdt\right\vert
\end{equation*}%
\begin{equation*}
+\frac{1}{2}(n-1)\left\vert \int\nolimits_{0}^{T}\int\nolimits_{B\left(
0,4r\right) }\left( 1-\eta _{2r}\left( x\right) \right) \left( v_{m}\left(
t,x\right) -v_{l}\left( t,x\right) \right) a\left( x\right) \left(
v_{mt}\left( t,x\right) -v_{lt}\left( t,x\right) \right) dxdt\right\vert
\end{equation*}%
\begin{equation*}
+\alpha \left\vert
\sum\nolimits_{i=1}^{n}\int\nolimits_{0}^{T}\int\nolimits_{B\left(
0,4r\right) }\left( 1-\eta _{2r}\left( x\right) \right) x_{i}\left(
v_{m}\left( t,x\right) -v_{l}\left( t,x\right) \right) _{x_{i}}\left(
v_{m}\left( t,x\right) -v_{l}\left( t,x\right) \right) dxdt\right\vert
\end{equation*}%
\begin{equation*}
+\left\vert
\sum\nolimits_{i=1}^{n}\int\nolimits_{0}^{T}\int\nolimits_{B\left(
0,4r\right) }\left( 1-\eta _{2r}\left( x\right) \right) x_{i}\left(
v_{m}\left( t,x\right) -v_{l}\left( t,x\right) \right) _{x_{i}}\right.
\end{equation*}%
\begin{equation*}
\left. \left( \frac{1}{\lambda _{k_{m}}}f\left( \lambda _{k_{m}}v_{m}\left(
t,x\right) \right) -\frac{1}{\lambda _{k_{l}}}f\left( \lambda
_{k_{l}}v_{l}\left( t,x\right) \right) \right) dxdt\right\vert
\end{equation*}%
\begin{equation*}
+\frac{1}{2}(n-1)\left\vert \int\nolimits_{0}^{T}\int\nolimits_{B\left(
0,4r\right) }\left( 1-\eta _{2r}\left( x\right) \right) \left( v_{m}\left(
t,x\right) -v_{l}\left( t,x\right) \right) \right.
\end{equation*}%
\begin{equation*}
\left. \left( \frac{1}{\lambda _{k_{m}}}f\left( \lambda _{k_{m}}v_{m}\left(
t,x\right) \right) -\frac{1}{\lambda _{k_{l}}}f\left( \lambda
_{k_{l}}v_{l}\left( t,x\right) \right) \right) dxdt\right\vert
\end{equation*}%
\begin{equation*}
\leq c_{3}r\left( \left\Vert \nabla v_{m}\left( T\right) -\nabla v_{l}\left(
T\right) \right\Vert _{L^{2}(B\left( 0,4r\right) )}+\left\Vert \nabla
v_{m}\left( 0\right) -\nabla v_{l}\left( 0\right) \right\Vert
_{L^{2}(B\left( 0,4r\right) )}\right)
\end{equation*}%
\begin{equation*}
+c_{3}\left\Vert v_{mt}-v_{lt}\right\Vert _{L^{2}(0,T;L^{2}(B\left(
0,4r\right) \backslash B\left( 0,2r\right) ))}^{2}+c_{3}\left\Vert
v_{m}-v_{l}\right\Vert _{L^{2}(0,T;H^{2}(B\left( 0,4r\right) \backslash
B\left( 0,2r\right) ))}^{2}
\end{equation*}%
\begin{equation*}
+c_{3}r\sqrt{T}\left\Vert \nabla v_{m}-\nabla v_{l}\right\Vert
_{L^{2}(\left( 0,T\right) \times B\left( 0,4r\right) )}\text{,}
\end{equation*}%
for $\left\{ \lambda _{k_{m}}\right\} _{m=1}^{\infty }\subset \left( 0,M%
\right] $, where the constant $c_{3}$ depends on $\left\Vert \eta
\right\Vert _{C^{2}(\overline{B(0,2)})}$, $M$, $\underset{m}{\sup }%
\left\Vert v_{mt}\right\Vert _{L^{\infty }(0,\infty ;L^{2}(%
\mathbb{R}
^{n}))}$ and $\underset{m}{\sup }\left\Vert v_{m}\right\Vert _{L^{\infty
}(0,\infty ;H^{2}(%
\mathbb{R}
^{n}))}$. If, additionally, condition (1.7) is satisfied, then the above
inequality holds for $\left\{ \lambda _{k_{m}}\right\} _{m=1}^{\infty
}\subset (0,\infty ),$ with the constant $c_{3}$ depending on $\left\Vert
\eta \right\Vert _{C^{2}(\overline{B(0,2)})}$, $\left\Vert f^{\prime
}\right\Vert _{L^{\infty }\left(
\mathbb{R}
\right) }$ , $\underset{m}{\sup }\left\Vert v_{mt}\right\Vert _{L^{\infty
}(0,\infty ;L^{2}(%
\mathbb{R}
^{n}))}$ and $\underset{m}{\sup }\left\Vert v_{m}\right\Vert _{L^{\infty
}(0,\infty ;H^{2}(%
\mathbb{R}
^{n}))}$. Since the sequence $\left\{ v_{m}\right\} _{m=1}^{\infty }$ is
bounded in $C\left( \left[ 0,T\right] ;H^{2}\left(
\mathbb{R}
^{n}\right) \right) $ and the sequence $\left\{ v_{mt}\right\}
_{m=1}^{\infty }$ is bounded in $C\left( \left[ 0,T\right] ;L^{2}\left(
\mathbb{R}
^{n}\right) \right) $, by generalized Arzela-Ascoli theorem, the sequence $%
\left\{ v_{m}\right\} _{m=1}^{\infty }$ is relatively compact in $C\left( %
\left[ 0,T\right] ;H^{1}\left( B\left( 0,r\right) \right) \right) $, for
every $r>0$. Then taking into account (2.17)-(2.19) in the last inequality,
we have
\begin{equation*}
\underset{m\rightarrow \infty }{\lim \sup }\underset{l\rightarrow \infty }{%
\lim \sup }\int\nolimits_{0}^{T}\left[ \left\Vert \Delta \left( v_{m}\left(
t\right) -v_{l}\left( t\right) \right) \right\Vert _{L^{2}\left( B\left(
0,2r\right) \right) }^{2}+\left\Vert v_{mt}\left( t\right) -v_{lt}\left(
t\right) \right\Vert _{L^{2}\left( B\left( 0,2r\right) \right) }^{2}\right]
dt
\end{equation*}%
\begin{equation*}
\leq c_{4}\left( 1+\frac{T}{r}\right) \text{, }\forall T\geq 0\text{ and }%
\forall r\geq r_{0}\text{,}
\end{equation*}%
which, again together with (2.17)-(2.19), yields%
\begin{equation*}
\underset{m\rightarrow \infty }{\lim \sup }\underset{l\rightarrow \infty }{%
\lim \sup }\int\nolimits_{0}^{T}\left[ \left\Vert v_{m}\left( t\right)
-v_{l}\left( t\right) \right\Vert _{H^{2}\left(
\mathbb{R}
^{n}\right) }^{2}+\left\Vert v_{mt}\left( t\right) -v_{lt}\left( t\right)
\right\Vert _{L^{2}\left(
\mathbb{R}
^{n}\right) }^{2}\right] dt
\end{equation*}%
\begin{equation}
\leq c_{5}\left( 1+\frac{T}{r}\right) \text{, }\forall T\geq 0\text{ and }%
\forall r\geq r_{0}\text{. }  \tag{2.21}
\end{equation}%
Multiplying (2.20) by $2\left( v_{mt}-v_{lt}\right) $ integrating over $%
\left( t,T\right) \times
\mathbb{R}
^{n}$ and considering (1.3), we get%
\begin{equation*}
\left\Vert v_{mt}\left( T\right) -v_{lt}\left( T\right) \right\Vert
_{L^{2}\left(
\mathbb{R}
^{n}\right) }^{2}+\left\Vert \Delta \left( v_{m}\left( T\right) -v_{l}\left(
T\right) \right) \right\Vert _{L^{2}\left(
\mathbb{R}
^{n}\right) }^{2}+\alpha \left\Vert v_{m}\left( T\right) -v_{l}\left(
T\right) \right\Vert _{L^{2}\left(
\mathbb{R}
^{n}\right) }^{2}
\end{equation*}%
\begin{equation*}
\leq \left\Vert v_{mt}\left( t\right) -v_{lt}\left( t\right) \right\Vert
_{L^{2}\left(
\mathbb{R}
^{n}\right) }^{2}+\left\Vert \Delta \left( v_{m}\left( t\right) -v_{l}\left(
t\right) \right) \right\Vert _{L^{2}\left(
\mathbb{R}
^{n}\right) }^{2}+\alpha \left\Vert v_{m}\left( t\right) -v_{l}\left(
t\right) \right\Vert _{L^{2}\left(
\mathbb{R}
^{n}\right) }^{2}
\end{equation*}%
\begin{equation*}
+2\int\nolimits_{t}^{T}\int\nolimits_{%
\mathbb{R}
^{n}}\left( \frac{1}{\lambda _{k_{l}}}f\left( \lambda _{k_{l}}v_{l}\left(
s,x\right) \right) -\frac{1}{\lambda _{k_{m}}}f\left( \lambda
_{k_{m}}v_{m}\left( s,x\right) \right) \right) \left( v_{mt}\left(
s,x\right) -v_{lt}\left( s,x\right) \right) dxds\text{.}
\end{equation*}%
Integrating the last inequality from $0$ to $T$ with respect to $t$ and
taking into account (2.21), we have%
\begin{equation*}
\underset{m\rightarrow \infty }{\lim \sup }\underset{l\rightarrow \infty }{%
\lim \sup }\left( \left\Vert v_{mt}\left( T\right) -v_{lt}\left( T\right)
\right\Vert _{L^{2}\left(
\mathbb{R}
^{n}\right) }^{2}+\left\Vert \Delta \left( v_{m}\left( T\right) -v_{l}\left(
T\right) \right) \right\Vert _{L^{2}\left(
\mathbb{R}
^{n}\right) }^{2}\right.
\end{equation*}%
\begin{equation*}
\left. +\alpha \left\Vert v_{m}\left( T\right) -v_{l}\left( T\right)
\right\Vert _{L^{2}\left(
\mathbb{R}
^{n}\right) }^{2}\right) \leq c_{6}(\frac{1}{T}+\frac{1}{r})
\end{equation*}%
\begin{equation*}
+\frac{1}{T}\underset{m\rightarrow \infty }{\lim \sup }\underset{%
l\rightarrow \infty }{\lim \sup }\int\nolimits_{0}^{T}\int\nolimits_{t}^{T}%
\int\nolimits_{%
\mathbb{R}
^{n}}\left( \frac{1}{\lambda _{k_{l}}}f\left( \lambda _{k_{l}}v_{l}\left(
s,x\right) \right) -\frac{1}{\lambda _{k_{m}}}f\left( \lambda
_{k_{m}}v_{m}\left( s,x\right) \right) \right)
\end{equation*}%
\begin{equation}
\times \left( v_{mt}\left( s,x\right) -v_{lt}\left( s,x\right) \right) dxdsdt%
\text{, }  \tag{2.22}
\end{equation}%
for all $T\geq 1$. Let us estimate the second term on the right side of
(2.22). By (1.7) and (2.17), we find%
\begin{equation*}
\frac{1}{\lambda _{k_{m}}^{2}}F\left( \lambda _{k_{m}}v_{m}\left( t,x\right)
\right) \rightarrow \Psi _{\lambda _{0}}\left( v\left( t,x\right) \right)
\text{ a.e. in }\left( 0,T\right) \times B\left( 0,r\right) \text{, }\forall
r>0\text{,}
\end{equation*}%
where $\Psi _{\lambda }\left( s\right) =\int\nolimits_{0}^{s}\Phi _{\lambda
}\left( \tau \right) d\tau $. On the other hand, since $\left\{ \frac{1}{%
\lambda _{k_{m}}^{2}}F\left( \lambda _{k_{m}}v_{m}\right) \right\}
_{m=1}^{\infty }$ is bounded \newline
in $W^{1,1}\left( \left( 0,T\right) \times
\mathbb{R}
^{n}\right) $, we obtain%
\begin{equation}
\left\{
\begin{array}{c}
\frac{1}{\lambda _{k_{m}}^{2}}F\left( \lambda _{k_{m}}v_{m}\right)
\rightarrow \Psi _{\lambda _{0}}\text{ strongly in }L^{1}\left( \left(
0,T\right) \times B\left( 0,r\right) \right) \text{, }\forall T>0\text{, }%
\forall r>0\text{,} \\
\frac{1}{\lambda _{k_{m}}^{2}}F\left( \lambda _{k_{m}}v_{m}\right)
\rightarrow \Psi _{\lambda _{0}}\text{ weakly in }L^{\frac{n+1}{n}}\left(
\left( 0,T\right) \times
\mathbb{R}
^{n}\right) \text{. \ \ \ \ \ \ \ \ \ \ \ \ \ \ \ \ \ \ \ \ \ \ \ \ }%
\end{array}%
\right.   \tag{2.23}
\end{equation}%
Then by (1.6), (2.19) and (2.23), we get%
\begin{equation*}
\underset{m\rightarrow \infty }{\lim \sup }\underset{l\rightarrow \infty }{%
\lim \sup }\int\nolimits_{0}^{T}\int\nolimits_{t}^{T}\int\nolimits_{%
\mathbb{R}
^{n}}\left( \frac{1}{\lambda _{k_{l}}}f\left( \lambda _{k_{l}}v_{l}\left(
s,x\right) \right) -\frac{1}{\lambda _{k_{m}}}f\left( \lambda
_{k_{m}}v_{m}\left( s,x\right) \right) \right) \left( v_{mt}\left(
s,x\right) -v_{lt}\left( s,x\right) \right) dxdsdt
\end{equation*}%
\begin{equation*}
=\underset{m\rightarrow \infty }{\lim \sup }\underset{l\rightarrow \infty }{%
\lim \sup }\int\nolimits_{0}^{T}\int\nolimits_{t}^{T}\int\nolimits_{%
\mathbb{R}
^{n}}\left( -\frac{1}{\lambda _{k_{l}}}f\left( \lambda _{k_{l}}v_{l}\left(
s,x\right) \right) v_{lt}\left( s,x\right) \right.
\end{equation*}%
\begin{equation*}
\left. -\frac{1}{\lambda _{k_{m}}}f\left( \lambda _{k_{m}}v_{m}\left(
s,x\right) \right) v_{mt}\left( s,x\right) +2\frac{\partial }{\partial s}%
\Psi _{\lambda _{0}}\left( v(s,x)\right) \right) dxdsdt
\end{equation*}%
\begin{equation*}
\leq T\underset{m\rightarrow \infty }{\lim \sup }\underset{l\rightarrow
\infty }{\lim \sup }\int\nolimits_{%
\mathbb{R}
^{n}}\left( -\frac{1}{\lambda _{k_{l}}^{2}}F\left( \lambda
_{k_{l}}v_{l}\left( T,x\right) \right) -\frac{1}{\lambda _{k_{m}}^{2}}%
F\left( \lambda _{k_{m}}v_{m}\left( T,x\right) \right) +2\Psi _{\lambda
_{0}}\left( v(T,x)\right) \right) dx
\end{equation*}%
\begin{equation*}
+\underset{m\rightarrow \infty }{\lim \sup }\underset{l\rightarrow \infty }{%
\lim \sup }\int\nolimits_{0}^{T}\int\nolimits_{B\left( 0,r\right) }\left(
\frac{1}{\lambda _{k_{l}}^{2}}F\left( \lambda _{k_{l}}v_{l}\left( t,x\right)
\right) +\frac{1}{\lambda _{k_{m}}^{2}}F\left( \lambda _{k_{m}}v_{m}\left(
t,x\right) \right) -2\Psi _{\lambda _{0}}\left( v(t,x)\right) \right) dxdt
\end{equation*}%
\begin{equation*}
+\underset{m\rightarrow \infty }{\lim \sup }\underset{l\rightarrow \infty }{%
\lim \sup }\int\nolimits_{0}^{T}\int\nolimits_{%
\mathbb{R}
^{n}\backslash B\left( 0,r\right) }\left( \frac{1}{\lambda _{k_{l}}^{2}}%
F\left( \lambda _{k_{l}}v_{l}\left( t,x\right) \right) +\frac{1}{\lambda
_{k_{m}}^{2}}F\left( \lambda _{k_{m}}v_{m}\left( t,x\right) \right) -2\Psi
_{\lambda _{0}}\left( v(t,x)\right) \right) dxdt
\end{equation*}%
\begin{equation*}
=-T\underset{m\rightarrow \infty }{\lim \inf }\underset{l\rightarrow \infty }%
{\lim \inf }\int\nolimits_{%
\mathbb{R}
^{n}}\left( \frac{1}{\lambda _{k_{l}}^{2}}F\left( \lambda
_{k_{l}}v_{l}\left( T,x\right) \right) +\frac{1}{\lambda _{k_{m}}^{2}}%
F\left( \lambda _{k_{m}}v_{m}\left( T,x\right) \right) -2\Psi _{\lambda
_{0}}\left( v(T,x)\right) \right) dx
\end{equation*}%
\begin{equation*}
+\underset{m\rightarrow \infty }{\lim \sup }\underset{l\rightarrow \infty }{%
\lim \sup }\int\nolimits_{0}^{T}\int\nolimits_{%
\mathbb{R}
^{n}\backslash B\left( 0,r\right) }\left( \frac{1}{\lambda _{k_{l}}^{2}}%
F\left( \lambda _{k_{l}}v_{l}\left( t,x\right) \right) +\frac{1}{\lambda
_{k_{m}}^{2}}F\left( \lambda _{k_{m}}v_{m}\left( t,x\right) \right) -2\Psi
_{\lambda _{0}}\left( v(t,x)\right) \right) dxdt
\end{equation*}%
\begin{equation*}
\leq \underset{m\rightarrow \infty }{\lim \sup }\underset{l\rightarrow
\infty }{\lim \sup }\int\nolimits_{0}^{T}\int\nolimits_{%
\mathbb{R}
^{n}\backslash B\left( 0,r\right) }\left( \frac{1}{\lambda _{k_{l}}^{2}}%
F\left( \lambda _{k_{l}}v_{l}\left( t,x\right) \right) +\frac{1}{\lambda
_{k_{m}}^{2}}F\left( \lambda _{k_{m}}v_{m}\left( t,x\right) \right) \right)
dxdt
\end{equation*}%
\begin{equation*}
\leq \underset{m\rightarrow \infty }{c_{7}\lim \sup }\int\nolimits_{0}^{T}%
\left( \left\Vert v_{m}\left( t\right) \right\Vert _{L^{2}\left(
\mathbb{R}
^{n}\backslash B\left( 0,r\right) \right) }^{2}+\left\Vert \Delta
v_{m}\left( t\right) \right\Vert _{L^{2}\left(
\mathbb{R}
^{n}\backslash B\left( 0,r\right) \right) }^{2}\right) dt
\end{equation*}%
\begin{equation}
\leq c_{8}\left( 1+\frac{T}{r}\right) \text{, }\forall T\geq 0\text{, }%
\forall r\geq r_{0}\text{,}  \tag{2.24}
\end{equation}%
for $\left\{ \lambda _{k_{m}}\right\} _{m=1}^{\infty }\subset \left( 0,M%
\right] $, where constants $c_{7}$ and $c_{8}$ depend on $M$ and $\underset{m%
}{\sup }\left\Vert v_{m}\right\Vert _{L^{\infty }(0,\infty ;H^{2}(%
\mathbb{R}
^{n}))}$. If, additionally, condition (1.7) is satisfied, then (2.24) holds
for $\left\{ \lambda _{k_{m}}\right\} _{m=1}^{\infty }\subset (0,\infty ),$
with constants $c_{7}$ and $c_{8}$ depending on $\left\Vert f^{\prime
}\right\Vert _{L^{\infty }\left(
\mathbb{R}
\right) }$ and $\underset{m}{\sup }\left\Vert v_{m}\right\Vert _{L^{\infty
}(0,\infty ;H^{2}(%
\mathbb{R}
^{n}))}$. Taking into account (2.24) in (2.22), we get%
\begin{equation*}
\underset{m\rightarrow \infty }{\lim \sup }\underset{l\rightarrow \infty }{%
\lim \sup }\left\Vert S^{\lambda _{k_{m}}}(T+t_{k_{m}}-T_{0})\varphi
_{k_{m}}-S^{\lambda _{k_{l}}}(T+t_{k_{l}}-T_{0})\varphi _{k_{l}}\right\Vert
_{H^{2}\left(
\mathbb{R}
^{n}\right) \times L^{2}\left(
\mathbb{R}
^{n}\right) }^{2}\text{ }
\end{equation*}%
\begin{equation*}
\leq c_{9}(\frac{1}{T}+\frac{1}{r})\text{, }\forall T\geq 1\text{, }\forall
r\geq r_{0}\text{. }
\end{equation*}%
Choosing $T=T_{0}$ in the above inequality, we have%
\begin{equation*}
\underset{m\rightarrow \infty }{\lim \sup }\underset{l\rightarrow \infty }{%
\lim \sup }\left\Vert S^{\lambda _{k_{m}}}(t_{k_{m}})\varphi
_{k_{m}}-S^{\lambda _{k_{l}}}(t_{k_{l}})\varphi _{k_{l}}\right\Vert
_{H^{2}\left(
\mathbb{R}
^{n}\right) \times L^{2}\left(
\mathbb{R}
^{n}\right) }^{2}\leq c_{9}(\frac{1}{T_{0}}+\frac{1}{r})\text{, }\forall
T_{0}\geq 1\text{, }\forall r\geq r_{0}\text{,}
\end{equation*}%
and consequently%
\begin{equation*}
\underset{k\rightarrow \infty }{\lim \inf }\underset{m\rightarrow \infty }{%
\lim \inf }\left\Vert S^{\lambda _{k}}(t_{k})\varphi _{k}-S^{\lambda
_{m}}(t_{m})\varphi _{m}\right\Vert _{H^{2}\left(
\mathbb{R}
^{n}\right) \times L^{2}\left(
\mathbb{R}
^{n}\right) }=0\text{.}
\end{equation*}%
Thus, by using the argument at the end of the proof of \cite[Lemma 3.4]{19},
we complete the proof of the lemma.
\end{proof}

\begin{lemma}
Assume that conditions (1.3)-(1.6) hold and $\lambda \in \lbrack 0,\infty ]$%
. Then%
\begin{equation*}
\underset{t\rightarrow \infty }{\lim }\underset{\left( u_{0},u_{1}\right)
\in B}{\sup }\left\Vert S^{\lambda }\left( t\right) \left(
u_{0},u_{1}\right) \right\Vert _{H^{2}\left(
\mathbb{R}
^{n}\right) \times L^{2}\left(
\mathbb{R}
^{n}\right) }=0\text{,}
\end{equation*}%
for every bounded subset $B\subset H^{2}\left(
\mathbb{R}
^{n}\right) \times L^{2}\left(
\mathbb{R}
^{n}\right) $.
\end{lemma}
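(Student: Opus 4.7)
The plan is to combine the uniform asymptotic compactness from Lemma 2.3 with the point dissipativity established in \cite{18} for the original semilinear plate equation, and then to upgrade pointwise decay to decay uniform on bounded sets via Dini's theorem.

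First I would establish point dissipativity for the whole family: for every $\varphi\in H^2(\mathbb{R}^n)\times L^2(\mathbb{R}^n)$ and every $\lambda\in[0,\infty]$, $\|S^\lambda(t)\varphi\|_{H^2\times L^2}\to 0$ as $t\to\infty$. For $\lambda=1$ this is precisely the point dissipativity result of \cite{18}. For $\lambda\in(0,\infty)$ it transfers by the scaling $w=\lambda u$, which identifies problem (2.9) with (1.1)--(1.2) and gives the identity $S^\lambda(t)(u_0,u_1)=\lambda^{-1}S^1(t)(\lambda u_0,\lambda u_1)$. For the limit cases $\lambda\in\{0,\infty\}$, I would combine Lemma 2.2 (continuity in $\lambda$ for each fixed $t$) with the uniformity provided by Lemma 2.3 (as $\lambda_k\to 0^+$ or $\lambda_k\to\infty$) to commute the $t\to\infty$ and $\lambda_k\to\lambda$ limits.

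Next I would introduce the $\omega$-limit set
\begin{equation*}
\omega(B)=\bigl\{\psi: \exists\,\varphi_k\in B,\ t_k\to\infty\ \text{with}\ S^\lambda(t_k)\varphi_k\to\psi\ \text{in}\ H^2\times L^2\bigr\}.
\end{equation*}
Lemma 2.3 applied with the constant sequence $\lambda_k\equiv\lambda$ (using $M=\lambda+1$ if $\lambda<\infty$, the second half of Lemma 2.3 if $\lambda=\infty$, and a parallel compactness statement in the linear case $\lambda=0$) implies that $\omega(B)$ is non-empty, compact, invariant under $\{S^\lambda(t)\}_{t\geq 0}$, and attracts $B$ in the sense that $\sup_{\varphi\in B}\mathrm{dist}(S^\lambda(t)\varphi,\omega(B))\to 0$. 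On the compact set $\omega(B)$, the family $\psi\mapsto E_\lambda(S^\lambda(t)\psi)$ is continuous in $\psi$, non-increasing in $t$ by the energy identity $\frac{d}{dt}E_\lambda=-\int a|u_t|^2\,dx$, and converges pointwise to $0$ by point dissipativity together with continuity of $E_\lambda$ at the origin. Dini's theorem then yields $\sup_{\psi\in\omega(B)}E_\lambda(S^\lambda(t)\psi)\to 0$; since $\Phi_\lambda(s)s\geq 0$ implies $\int \Psi_\lambda(u)\,dx\geq 0$, this gives $\sup_{\psi\in\omega(B)}\|S^\lambda(t)\psi\|_{H^2\times L^2}\to 0$. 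Combining the uniform decay on $\omega(B)$ with the attraction of $\omega(B)$ and the local Lipschitz continuity of $S^\lambda(T)$ on bounded sets (a Gr\"onwall-type estimate for the difference of two solutions, as in Lemma 2.2) produces the desired $\sup_{\varphi\in B}\|S^\lambda(t)\varphi\|\to 0$.

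The main obstacle will be the handling of the endpoint cases $\lambda\in\{0,\infty\}$: the scaling argument is clean only for $\lambda\in(0,\infty)$, while at the degenerate endpoints (where the nonlinearity becomes linear, respectively piecewise linear) the long-time limit must be commuted with the passage $\lambda_k\to 0^+$ or $\lambda_k\to\infty$. This is exactly where the sequential limit transition technique, combined with the uniformity built into Lemma 2.3, becomes indispensable.
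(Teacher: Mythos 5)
Your overall architecture coincides with the paper's: Lemma 2.3 gives a compact, invariant, attracting $\omega$-limit set; the energy identity makes $E_\lambda$ nonincreasing along trajectories; and pointwise decay plus compactness of $\omega(B)$ is upgraded to uniform decay (your Dini argument is just a packaged form of the paper's subsequence/contradiction argument, and your final ``Lipschitz plus attraction'' step can be shortened: since $\omega_\lambda(B)$ is invariant, uniform decay on it already forces $\omega_\lambda(B)=\{(0,0)\}$, and attraction then gives the claim with no Lipschitz estimate). The genuine gap is in how you obtain point dissipativity at the endpoint values $\lambda\in\{0,\infty\}$ (and, relatedly, the ``parallel compactness statement'' for $\lambda=0$, which you only postulate). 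Knowing $\left\Vert S^{\lambda_k}(t)\varphi\right\Vert\rightarrow 0$ as $t\rightarrow\infty$ for each fixed $\lambda_k\in(0,\infty)$, together with $S^{\lambda_k}(t)\varphi\rightarrow S^{\lambda_0}(t)\varphi$ for each fixed $t$ (Lemma 2.2), does not let you ``commute the limits'': you would need convergence in $\lambda$ uniform in $t$, or decay in $t$ uniform in $\lambda$ --- and the latter is essentially Lemma 2.5, which the paper proves \emph{after} and \emph{by means of} Lemma 2.4, so invoking it here would be circular. Lemma 2.3 does not close this gap either: applied to $\lambda_{k_j}\rightarrow\lambda_0$, $t_j\rightarrow\infty$, it only says that $S^{\lambda_{k_j}}(t_j)\varphi$ has convergent subsequences; it does not identify the limit points as $(0,0)$, which is exactly what has to be proved.

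The paper sidesteps all of this by applying \cite[Theorem 2]{18} directly to problem (2.9) for every $\lambda\in[0,\infty]$: each $\Phi_\lambda$ (including $\Phi_0(u)=f^{\prime}(0)u$ and the piecewise linear $\Phi_\infty$) satisfies the same sign and growth conditions, and the corresponding stationary equation $\Delta^2 v+\alpha v+\Phi_\lambda(v)=0$ has only the zero solution, so the cited theorem gives $\left\Vert S^{\lambda}(t)x\right\Vert\rightarrow 0$ for every initial datum and every $\lambda$. If you replace your scaling-plus-commutation step by this direct application (your scaling identity $S^{\lambda}(t)(u_0,u_1)=\lambda^{-1}S^{1}(t)(\lambda u_0,\lambda u_1)$ is correct but unnecessary), and observe that the proof of Lemma 2.3 goes through verbatim for the constant families $\lambda\equiv 0$ and $\lambda\equiv\infty$ (the latter under (1.7), which is implicitly needed anyway to define $S^{\infty}$), your argument becomes complete and is then essentially the paper's proof.
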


\begin{proof}
Let $B\subset H^{2}\left(
\mathbb{R}
^{n}\right) \times L^{2}\left(
\mathbb{R}
^{n}\right) $. From Lemma 2.3, it follows that the $\omega $-limit set of $B$%
, namely%
\begin{equation*}
\omega _{\lambda }\left( B\right) =\underset{\tau \geq 0}{\cap }\overline{%
\underset{t\geq \tau }{\cup }S^{\lambda }\left( t\right) B}
\end{equation*}%
is compact in $H^{2}\left(
\mathbb{R}
^{n}\right) \times L^{2}\left(
\mathbb{R}
^{n}\right) $, invariant with respect to $S^{\lambda }\left( t\right) $ and%
\begin{equation*}
\underset{t\rightarrow \infty }{\lim }\underset{\varphi \in B}{\sup }\text{ }%
\underset{\psi \in \omega _{\lambda }\left( B\right) }{\inf }\left\Vert
S^{\lambda }\left( t\right) \varphi -\psi \right\Vert _{H^{2}\left(
\mathbb{R}
^{n}\right) \times L^{2}\left(
\mathbb{R}
^{n}\right) }=0\text{.}
\end{equation*}%
Our aim is to show that $\omega _{\lambda }\left( B\right) \equiv \{(0,0)\}$%
. Since $\omega _{\lambda }\left( B\right) $ is invariant, it is enough to
show that%
\begin{equation}
\underset{t\rightarrow \infty }{\lim }\underset{\left( u_{0},u_{1}\right)
\in \omega _{\lambda }\left( B\right) }{\sup }\left\Vert S^{\lambda }\left(
t\right) \left( u_{0},u_{1}\right) \right\Vert _{H^{2}\left(
\mathbb{R}
^{n}\right) \times L^{2}\left(
\mathbb{R}
^{n}\right) }=0\text{.}  \tag{2.25}
\end{equation}%
Let $\left( u_{0},u_{1}\right) \in \omega _{\lambda }\left( B\right) $ and $%
\left( u_{\lambda }\left( t\right) ,u_{\lambda t}\left( t\right) \right)
=S^{\lambda }\left( t\right) \left( u_{0},u_{1}\right) $. Multiplying (2.9)$%
_{1}$ by $u_{\lambda t}$ and integrating over $\left( s,t\right) \times
\mathbb{R}
^{n}$, for the energy functional%
\begin{equation*}
E_{\lambda }(t,u_{0},u_{1})=\frac{1}{2}\left\Vert u_{\lambda t}\left(
t\right) \right\Vert _{L^{2}\left(
\mathbb{R}
^{n}\right) }^{2}
\end{equation*}%
\begin{equation*}
+\frac{1}{2}\left\Vert \Delta u_{\lambda }\left( t\right) \right\Vert
_{L^{2}\left(
\mathbb{R}
^{n}\right) }^{2}+\frac{\alpha }{2}\left\Vert u_{\lambda }\left( t\right)
\right\Vert _{L^{2}\left(
\mathbb{R}
^{n}\right) }^{2}+\int\nolimits_{%
\mathbb{R}
^{n}}\Psi _{\lambda }\left( u_{\lambda }\left( t,x\right) \right) dx\text{,}
\end{equation*}%
we have
\begin{equation}
E_{\lambda }(t,u_{0},u_{1})+\int\nolimits_{s}^{t}\int\nolimits_{%
\mathbb{R}
^{n}}a\left( x\right) \left\vert u_{\lambda t}\left( t,x\right) \right\vert
^{2}dxdt=E_{\lambda }(s,u_{0},u_{1})\text{, }\forall t\geq s\text{. }
\tag{2.26}
\end{equation}%
So, $E_{\lambda }(t,u_{0},u_{1})$ is nonincreasing with respect to $t$. To
prove (2.25), it is enough to show that
\begin{equation}
\underset{t\rightarrow \infty }{\lim }\underset{\left( u_{0},u_{1}\right)
\in \omega _{\lambda }\left( B\right) }{\sup }E_{\lambda }(t,u_{0},u_{1})=0%
\text{.}  \tag{2.27}
\end{equation}%
Assume that (2.27) is not true. Then there exist $\epsilon >0$, $%
t_{k}\rightarrow \infty $ and the sequence $\left\{ \left(
u_{0k},u_{1k}\right) \right\} _{k=1}^{\infty }\subset \omega _{\lambda
}\left( B\right) $ such that%
\begin{equation}
E_{\lambda }(t_{k},u_{0k},u_{1k})\geq \epsilon \text{.}  \tag{2.28}
\end{equation}%
Since $\omega _{\lambda }\left( B\right) $ is compact, the sequence $\left\{
\left( u_{0k},u_{1k}\right) \right\} _{k=1}^{\infty }$ has a convergent
subsequence with limit in $\omega _{\lambda }\left( B\right) $. Without loss
of generality, denote this subsequence again by $\left\{ \left(
u_{0k},u_{1k}\right) \right\} _{k=1}^{\infty }$. Then we have%
\begin{equation*}
\left( u_{0k},u_{1k}\right) \rightarrow \left( v,w\right) \text{ strongly in
}H^{2}\left(
\mathbb{R}
^{n}\right) \times L^{2}\left(
\mathbb{R}
^{n}\right) \text{,}
\end{equation*}%
where $\left( v,w\right) \in \omega _{\lambda }\left( B\right) $. Hence,
since $E_{\lambda }(t,\cdot ,\cdot )$ is continuous functional on $%
H^{2}\left(
\mathbb{R}
^{n}\right) \times L^{2}\left(
\mathbb{R}
^{n}\right) $, by Lemma 2.2, it follows that%
\begin{equation}
\underset{k\rightarrow \infty }{\lim }E_{\lambda
}(t,u_{0k},u_{1k})=E_{\lambda }(t,v,w)\text{, }\forall t\geq 0\text{.}
\tag{2.29}
\end{equation}%
Since the stationary equation corresponding to (2.9)$_{1}$ has only zero
solution, applying \cite[Theorem 2]{18}, we find that%
\begin{equation*}
\underset{t\rightarrow \infty }{\lim }\left\Vert S^{\lambda }\left( t\right)
x\right\Vert _{H^{2}\left(
\mathbb{R}
^{n}\right) \times L^{2}\left(
\mathbb{R}
^{n}\right) }=0\text{, \ \ }\forall x\in H^{2}\left(
\mathbb{R}
^{n}\right) \times L^{2}\left(
\mathbb{R}
^{n}\right) \text{.}
\end{equation*}%
Then for any $\epsilon >0$ there exists $t_{\epsilon }^{\prime }$ such that%
\begin{equation*}
E_{\lambda }(t_{\epsilon }^{\prime },v,w)<\frac{\epsilon }{2}\text{, }
\end{equation*}%
which, together with (2.29), yields that%
\begin{equation*}
E_{\lambda }(t_{\epsilon }^{\prime },u_{0k},u_{1k})<\epsilon \text{, }
\end{equation*}%
for large enough $k$. Since $E_{\lambda }(t,u_{0},u_{1})$ is nonincreasing
with respect to $t$, the last inequality contradicts (2.28). So, our
assumption is false, i.e. (2.27) is true and proof is completed.
\end{proof}

\begin{lemma}
Assume conditions (1.3)-(1.6) hold. Then%
\begin{equation*}
\underset{t\rightarrow \infty }{\lim }\underset{\lambda \in \left( 0,M\right]
}{\sup }\underset{\left( u_{0},u_{1}\right) \in B}{\sup }\left\Vert
S^{\lambda }\left( t\right) \left( u_{0},u_{1}\right) \right\Vert
_{H^{2}\left(
\mathbb{R}
^{n}\right) \times L^{2}\left(
\mathbb{R}
^{n}\right) }=0\text{,}
\end{equation*}%
for every bounded subset $B\subset H^{2}\left(
\mathbb{R}
^{n}\right) \times L^{2}\left(
\mathbb{R}
^{n}\right) $ and $M>0$.
\end{lemma}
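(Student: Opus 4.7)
I plan to argue by contradiction, combining the uniform asymptotic compactness of Lemma 2.3, the joint continuity of $S^{\lambda}(t)$ in $(\lambda, \varphi)$ provided by Lemma 2.2, and the pointwise (in $\lambda$) decay of Lemma 2.4. Suppose that the conclusion fails. Then there exist $\epsilon > 0$, $t_k \to \infty$, $\lambda_k \in (0,M]$, and $(u_{0k}, u_{1k}) \in B$ such that $\|S^{\lambda_k}(t_k)(u_{0k}, u_{1k})\|_{H^{2}\times L^{2}} \geq \epsilon$ for all $k$. By compactness of $[0,M]$, pass to a subsequence along which $\lambda_k \to \lambda_0 \in [0,M]$; then by Lemma 2.3 extract a further subsequence (not relabelled) such that $S^{\lambda_k}(t_k)(u_{0k}, u_{1k}) \to \Phi$ strongly in $H^{2}(\mathbb{R}^{n}) \times L^{2}(\mathbb{R}^{n})$ with $\|\Phi\|_{H^{2}\times L^{2}} \geq \epsilon$.

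The next step is to use the uniform bound (2.12) to produce a fixed ball $B^{\ast} := \{\psi \in H^{2}\times L^{2} : \|\psi\|_{H^{2}\times L^{2}} \leq K\}$, where $K = K(B, M, f)$ is chosen so that $S^{\lambda}(s)\varphi \in B^{\ast}$ for every $s \geq 0$, $\lambda \in (0,M]$, and $\varphi \in B$. For each fixed $T \geq 0$ and $k$ large enough that $t_k \geq T$, set
$$\psi_k^{T} := S^{\lambda_k}(t_k - T)(u_{0k}, u_{1k}) \in B^{\ast}, \qquad S^{\lambda_k}(t_k)(u_{0k}, u_{1k}) = S^{\lambda_k}(T)\psi_k^{T}.$$
Since $t_k - T \to \infty$ and $\lambda_k \in (0,M]$, a further application of Lemma 2.3 to $\{\psi_k^{T}\}_k$ yields a subsequence with $\psi_k^{T} \to \psi_0^{T}$ strongly in $H^{2}\times L^{2}$ for some $\psi_0^{T} \in B^{\ast}$. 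The continuity statement of Lemma 2.2, applicable because $\lambda_0 \in [0,M] \subset [0,\infty)$, then gives $S^{\lambda_k}(T)\psi_k^{T} \to S^{\lambda_0}(T)\psi_0^{T}$ strongly in $H^{2}\times L^{2}$, and on the other hand this sub-subsequence of $S^{\lambda_k}(t_k)(u_{0k}, u_{1k})$ still converges to $\Phi$; hence $\Phi = S^{\lambda_0}(T)\psi_0^{T}$ and
$$\|S^{\lambda_0}(T)\psi_0^{T}\|_{H^{2}\times L^{2}} \geq \epsilon.$$

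Because $T \geq 0$ was arbitrary and $\psi_0^{T} \in B^{\ast}$, this yields $\sup_{\psi \in B^{\ast}} \|S^{\lambda_0}(T)\psi\|_{H^{2}\times L^{2}} \geq \epsilon$ for every $T \geq 0$, which directly contradicts Lemma 2.4 applied with $\lambda = \lambda_0 \in [0,\infty]$ and the bounded set $B^{\ast}$. The key point that makes the argument close is that the radius $K$ of $B^{\ast}$ is chosen once and for all, so a single application of Lemma 2.4 handles every $T$ simultaneously. The main obstacle I anticipate is purely bookkeeping: Lemma 2.3 must be invoked twice — first on the orbit endpoints and then on the time-pulled-back states $\psi_k^{T}$ — while keeping the extracted $\lambda$-limit $\lambda_0$ consistent so that Lemma 2.2 can be applied on the second step.
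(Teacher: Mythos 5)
Your proof is correct and follows essentially the same route as the paper: argue by contradiction, pull the orbit back by a fixed time, use Lemma 2.3 for compactness of the pulled-back states together with (2.12) for the fixed absorbing ball, apply Lemma 2.2 to pass to the limit semigroup $S^{\lambda_0}$, and contradict the decay of Lemma 2.4 on that ball. The only difference is cosmetic: you run the argument for every pull-back time $T$ and identify an explicit limit $\Phi$, whereas the paper fixes a single pull-back time $t=t_{\epsilon}$ chosen from Lemma 2.4; both close the same way.
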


\begin{proof}
Taking into account (2.12), for any $\lambda \in \left( 0,M\right] $\ and $%
\left( u_{0},u_{1}\right) \in B$,\ we have%
\begin{equation}
\left\Vert S^{\lambda }\left( t\right) \left( u_{0},u_{1}\right) \right\Vert
_{H^{2}\left(
\mathbb{R}
^{n}\right) \times L^{2}\left(
\mathbb{R}
^{n}\right) }\leq r\text{,}  \tag{2.30}
\end{equation}%
where $r$ depends on $M$ and $B$, and is independent of $\lambda \in \left(
0,M\right] $, $t$ and $\left( u_{0},u_{1}\right) $. We will prove Lemma 2.5
by contradiction. Assume that Lemma 2.5 is not true. Then there exist $%
\epsilon >0$, sequences $\left\{ \lambda _{k}\right\} _{k=1}^{\infty
}\subset \left( 0,M\right] $, $\left\{ \left( u_{0k},u_{1k}\right) \right\}
_{k=1}^{\infty }\subset B$ and $t_{k}\rightarrow \infty $ such that%
\begin{equation}
\left\Vert S^{\lambda _{k}}\left( t_{k}\right) \left( u_{0k},u_{1k}\right)
\right\Vert _{H^{2}\left(
\mathbb{R}
^{n}\right) \times L^{2}\left(
\mathbb{R}
^{n}\right) }\geq \epsilon \text{.}  \tag{2.31}
\end{equation}%
For any $t_{k}\geq t$ consider the sequence $\left\{ S^{\lambda _{k}}\left(
t\right) S^{\lambda _{k}}\left( t_{k}-t\right) \left( u_{0k},u_{1k}\right)
\right\} _{k=1}^{\infty }$. By Lemma 2.3, the sequence $\left\{ S^{\lambda
_{k}}\left( t_{k}-t\right) \left( u_{0k},u_{1k}\right) \right\}
_{k=1}^{\infty }$ is relatively compact in $H^{2}\left(
\mathbb{R}
^{n}\right) \times L^{2}\left(
\mathbb{R}
^{n}\right) $. Then it has a convergent subsequence $\left\{ S^{\lambda
_{k_{m}}}\left( t_{k_{m}}-t\right) \left( u_{0k_{m}},u_{1k_{m}}\right)
\right\} _{m=1}^{\infty }$ with limit $\varphi _{0}\in H^{2}\left(
\mathbb{R}
^{n}\right) \times L^{2}\left(
\mathbb{R}
^{n}\right) $ and by Lemma 2.2,%
\begin{equation}
S^{\lambda _{k_{m}}}\left( t\right) S^{\lambda _{k_{m}}}\left(
t_{k_{m}}-t\right) \left( u_{0k_{m}},u_{1k_{m}}\right) \rightarrow
S^{\lambda _{0}}\left( t\right) \varphi _{0}\text{ strongly in }H^{2}\left(
\mathbb{R}
^{n}\right) \times L^{2}\left(
\mathbb{R}
^{n}\right) \text{,}  \tag{2.32}
\end{equation}%
where $\lambda _{0}\in \left[ 0,M\right] $ is the limit of $\left\{ \lambda
_{k_{m}}\right\} _{m=1}^{\infty }$. Furthermore, from (2.30) it follows that
\newline
$\left\{ S^{\lambda _{k_{m}}}\left( t_{k_{m}}-t\right) \left(
u_{0k_{m}},u_{1k_{m}}\right) \right\} _{m=1}^{\infty }\subset \mathcal{B}%
\left( 0,r\right) $, where $\mathcal{B}\left( 0,r\right) $ $=\left\{ \varphi
\in H^{2}\left(
\mathbb{R}
^{n}\right) \times L^{2}\left(
\mathbb{R}
^{n}\right) :\right. $\newline
$\left. \left\Vert \varphi \right\Vert _{H^{2}\left(
\mathbb{R}
^{n}\right) \times L^{2}\left(
\mathbb{R}
^{n}\right) }\leq r\right\} $. Consequently, $\varphi _{0}\in $ $\mathcal{B}%
\left( 0,r\right) $ and by previous lemma, for any $\epsilon >0$, there
exists $t_{\epsilon }$ such that%
\begin{equation*}
\underset{\varphi \in \mathcal{B}\left( 0,r\right) }{\sup }\left\Vert
S^{\lambda _{0}}\left( t\right) \varphi \right\Vert _{H^{2}\left(
\mathbb{R}
^{n}\right) \times L^{2}\left(
\mathbb{R}
^{n}\right) }<\frac{\epsilon }{2}\text{,}
\end{equation*}%
for $t\geq t_{\epsilon }$. Taking into account (2.32) for $t_{k_{m}}\geq
t_{\epsilon }$ and choosing $t=t_{\epsilon }$, we get%
\begin{equation*}
\left\Vert S^{\lambda _{k_{m}}}\left( t_{k_{m}}\right) \left(
u_{0k_{m}},u_{1k_{m}}\right) \right\Vert _{H^{2}\left(
\mathbb{R}
^{n}\right) \times L^{2}\left(
\mathbb{R}
^{n}\right) }<\epsilon \text{,}
\end{equation*}%
for large enough $k$, which contradicts (2.31). So, our assumption is false
and proof is completed.
\end{proof}

\begin{lemma}
Assume that conditions of Theorem 1.1 hold. Then there exist $t_{0}>0$ and $%
C\in \left( 0,1\right) $ such that the estimate
\begin{equation}
E_{\lambda }\left( t_{0},u_{0},u_{1}\right) \leq C  \tag{2.33}
\end{equation}%
holds for all $\lambda >0$ and $\left( u_{0},u_{1}\right) \in H^{2}\left(
\mathbb{R}
^{n}\right) \times L^{2}\left(
\mathbb{R}
^{n}\right) $ satisfying the condition $E_{\lambda }\left(
0,u_{0},u_{1}\right) =1$.
\end{lemma}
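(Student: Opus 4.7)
The plan is to argue by contradiction, exploiting the uniform point-dissipativity of Lemma 2.5 together with the limit-transition machinery of Lemmas 2.1--2.3. Negating the conclusion, for every $k\in\mathbb{N}$ there exist $\lambda_k>0$ and $(u_{0k},u_{1k})\in H^{2}(\mathbb{R}^{n})\times L^{2}(\mathbb{R}^{n})$ with $E_{\lambda_k}(0,u_{0k},u_{1k})=1$ and $E_{\lambda_k}(k,u_{0k},u_{1k})>1-1/k$. Since $F\geq 0$ by (1.6), the normalization $E_{\lambda_k}(0)=1$ keeps $\{(u_{0k},u_{1k})\}$ inside a fixed bounded set $B\subset H^{2}(\mathbb{R}^{n})\times L^{2}(\mathbb{R}^{n})$, while the energy identity (2.26) yields $\int_{0}^{k}\!\int_{\mathbb{R}^{n}}a(x)|u_{\lambda_k t}|^{2}\,dx\,dt\to 0$. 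Passing to a subsequence, either $\lambda_k\in(0,M]$ for some $M<\infty$, or $\lambda_k\to\infty$.

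When $\lambda_k\in(0,M]$, Lemma 2.5 applied to $B$ gives $\|S^{\lambda_k}(k)(u_{0k},u_{1k})\|_{H^{2}\times L^{2}}\to 0$. Since $|\Psi_{\lambda_k}(u)|=|\lambda_k^{-2}F(\lambda_k u)|\leq C(u^{2}+M^{p-1}|u|^{p+1})$ and $H^{2}(\mathbb{R}^{n})\hookrightarrow L^{p+1}(\mathbb{R}^{n})$ (available under $(n-4)p\leq n$), the potential term $\int\Psi_{\lambda_k}(u_{\lambda_k}(k))\,dx$ also tends to $0$, so $E_{\lambda_k}(k)\to 0$, contradicting $E_{\lambda_k}(k)>1-1/k$. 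In case $(i)$ with $\lambda_k\to\infty$, I would first extend Lemma 2.5 to $\lambda\in(0,\infty]$ by repeating its proof: its only ingredients are Lemmas 2.3 and 2.4, and both are valid for $\lambda_k\to\infty$ under (1.7). Since $|\Psi_{\infty}(u)|\leq\max(\alpha_{1},\alpha_{2})u^{2}/2$, the potential term is again dominated by $\|u\|_{L^{2}}^{2}$ and the same contradiction closes case $(i)$.

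The delicate situation is case $(ii)$ with $\lambda_k\to\infty$, where no limit semigroup $S^{\infty}$ exists. Here the plan is to borrow the multiplier argument of \cite{3}: multiply $(2.9)_{1}$ by $u_{\lambda_k}$ (composed with a cutoff $\eta_{r}$ as in the proof of Lemma 2.3 to handle the unbounded domain), integrate over $(0,T)\times\mathbb{R}^{n}$, and use the superlinearity (1.8) in its $\lambda$-invariant form $\Phi_{\lambda_k}(u)u\geq(2+\delta)\Psi_{\lambda_k}(u)$ to dominate the nonlinear term by a positive multiple of the potential energy, with constants independent of $\lambda_k$. After the standard manipulations, this should yield an observability-type inequality $T\,E_{\lambda_k}(T)\leq C\int_{0}^{T}\!\int a(x)|u_{\lambda_k t}|^{2}\,dx\,dt+(\text{tail terms at radius }r)+(\text{compact-support }L^{2}\text{-terms})$. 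The damping integral is $\to 0$ by hypothesis, the tail terms are handled by the cutoff estimates (2.18)--(2.19), and the compact-support remainder is absorbed using the asymptotic compactness of Lemma 2.3 together with the point-dissipativity of Lemma 2.4 applied to $\omega$-limit points, in exact analogy with the proof of Lemma 2.5. Once these are absorbed, $E_{\lambda_k}(T)\to 0$ contradicts $E_{\lambda_k}(k)>1-1/k$. This is precisely where the main obstacle lies: the multiplier identity must be written so that all nonlinear quantities are controlled by $(2+\delta)\Psi_{\lambda_k}$ with constants independent of $\lambda_k$, and the compact-support $L^{2}$-terms must be killed by compactness rather than by a unique-continuation theorem---which is exactly the step that fails for plate equations with non-smooth coefficients and that was left open in \cite[Remark 3.2]{3}.
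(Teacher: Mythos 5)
Your contradiction setup, the bounded-$\lambda$ case via Lemma 2.5, and the globally Lipschitz case with $\lambda_k\to\infty$ (extending the uniform decay argument using Lemmas 2.2--2.4, all of which remain valid for $\lambda\to\infty$ under (1.7)) follow essentially the same route as the paper, and those parts are fine.

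The gap is in the superlinear case with $\lambda_k\to\infty$. You propose to absorb the compact-support $L^{2}$-term in the observability inequality ``using the asymptotic compactness of Lemma 2.3 together with the point-dissipativity of Lemma 2.4 applied to $\omega$-limit points, in exact analogy with the proof of Lemma 2.5.'' But for the regime $\lambda_k\to\infty$ those tools are only available under hypothesis (1.7): Lemma 2.3's compactness statement for $\lambda_k\to\infty$, Lemma 2.2's convergence to $S^{\infty}$, and the very definition of the limit semigroup $\Phi_{\infty}$ all require (1.7), which is not assumed in case (ii). So the absorption step you rely on is exactly the step you yourself flag as ``where the main obstacle lies,'' and it is left unresolved; as written, case (ii) does not close. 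The paper closes it by a different, elementary mechanism that needs no compactness, no limit semigroup and no unique continuation: it takes Zuazua's estimate (2.38) as given, with $C(T)$ depending only on $f$ and $\delta$, and then kills the localized term directly from the normalization and the superlinearity. Indeed, $E_{\lambda_k}(0,u_{0k},u_{1k})=1$ and (2.11) give a uniform $L^{1}$ bound on $F_{\lambda_k}(u_{\lambda_k})=\lambda_k^{-2}F(\lambda_k u_{\lambda_k})$ over $(0,T)\times B(0,4r_0)$, while (1.8) yields $F(s)\geq c|s|^{2+\delta}$ for $|s|\geq 1$, hence
\begin{equation*}
\lambda_k^{\delta}\int\!\!\int_{\{|u_{\lambda_k}|\geq \lambda_k^{-1}\}\cap\{(0,T)\times B(0,4r_0)\}}|u_{\lambda_k}|^{2+\delta}\,dx\,dt\leq \widehat{C}_2,
\end{equation*}
so $\|u_{\lambda_k}\|_{L^{2}((0,T)\times B(0,4r_0))}\to 0$ as $\lambda_k\to\infty$; combined with the vanishing damping integral this forces $E_{\lambda_k}(T)\to 0$ and the contradiction. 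This quantitative use of $\lambda_k\to\infty$ against the superlinear growth is the key idea missing from your proposal.
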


\begin{proof}
We will prove lemma by contradiction. Assume that (2.33) is not true. Then
there exist sequences $C_{k}\nearrow 1$, $t_{k}\rightarrow \infty $, $%
\left\{ \lambda _{k}\right\} _{k=1}^{\infty }\subset \left( 0,\infty \right)
$ and $\left\{ \left( u_{0k},u_{1k}\right) \right\} _{k=1}^{\infty }$ $%
\subset H^{2}\left(
\mathbb{R}
^{n}\right) \times L^{2}\left(
\mathbb{R}
^{n}\right) $ satisfying the condition $E_{\lambda _{k}}\left(
0,u_{0k},u_{1k}\right) =1$ such that%
\begin{equation}
E_{\lambda _{k}}\left( t_{k},u_{0k},u_{1k}\right) >C_{k}\text{.}  \tag{2.34}
\end{equation}%
Assume that $\left\{ \lambda _{k}\right\} _{k=1}^{\infty }\subset \left( 0,M%
\right] $, for some $M>0$. Then from Lemma 2.5, we get%
\begin{equation}
\underset{t_{k}\rightarrow \infty }{\lim }\underset{\left(
u_{0},u_{1}\right) \in B_{0}}{\sup }\left\Vert S^{\lambda _{k}}\left(
t_{k}\right) \left( u_{0},u_{1}\right) \right\Vert _{H^{2}\left(
\mathbb{R}
^{n}\right) \times L^{2}\left(
\mathbb{R}
^{n}\right) }=0\text{,}  \tag{2.35}
\end{equation}%
where $B_{0}=\cup _{k=1}^{\infty }\left\{ \left( u_{0},u_{1}\right) \in
H^{2}\left(
\mathbb{R}
^{n}\right) \times L^{2}\left(
\mathbb{R}
^{n}\right) :E_{\lambda _{k}}\left( 0,u_{0},u_{1}\right) \leq 1\right\} $.
On the other hand, by (1.3), (1.6) and (2.11), we get%
\begin{equation*}
E_{\lambda _{k}}\left( t_{k},u_{0k},u_{1k}\right) \leq \widetilde{c}\left(
M\right) \left( \left\Vert u_{kt}\left( t\right) \right\Vert _{L^{2}(%
\mathbb{R}
^{n})}^{2}+\left\Vert u_{k}\left( t\right) \right\Vert _{H^{2}(%
\mathbb{R}
^{n})}^{2}+\left\Vert u_{k}\left( t\right) \right\Vert _{H^{2}(%
\mathbb{R}
^{n})}^{p+1}\right) \text{,}
\end{equation*}%
where $\left( u_{k}\left( t\right) ,u_{kt}\left( t\right) \right)
=S^{\lambda _{k}}\left( t\right) \left( u_{0k},u_{1k}\right) $. Hence, from
(2.35), we have%
\begin{equation*}
\underset{t_{k}\rightarrow \infty }{\lim }\underset{\left(
u_{0},u_{1}\right) \in B_{0}}{\sup }E_{\lambda _{k}}\left(
t_{k},u_{0k},u_{1k}\right) =0\text{,}
\end{equation*}%
which contradicts (2.34). So, the sequence $\left\{ \lambda _{k}\right\}
_{k=1}^{\infty }$ must have a subsequence which goes to infinity. Without
loss of generality, assume that $\lambda _{k}\rightarrow \infty $. Now, we
consider the globally Lipschitz case and the superlinear case separately. \

$\left( i\right) $\textit{The globally Lipschitz case}:\ \ Since the
nonlinear function $f$ is globally Lipschitz and \newline
$E_{\lambda _{k}}\left( 0,u_{0k},u_{1k}\right) =1$, we obtain%
\begin{equation}
E_{\lambda _{k}}\left( t_{k},u_{0k},u_{1k}\right) \leq c_{0}\left\Vert
S^{\lambda _{k}}\left( t_{k},u_{0k},u_{1k}\right) \right\Vert _{H^{2}\left(
\mathbb{R}
^{n}\right) \times L^{2}\left(
\mathbb{R}
^{n}\right) }\text{.}  \tag{2.36}
\end{equation}%
For any $t_{k}\geq t$ consider the sequence $\left\{ S^{\lambda _{k}}\left(
t\right) S^{\lambda _{k}}\left( t_{k}-t\right) \left( u_{0k},u_{1k}\right)
\right\} _{k=1}^{\infty }$. By Lemma 2.3, the sequence $\left\{ S^{\lambda
_{k}}\left( t_{k}-t\right) \left( u_{0k},u_{1k}\right) \right\}
_{k=1}^{\infty }$ is relatively compact in $H^{2}\left(
\mathbb{R}
^{n}\right) \times L^{2}\left(
\mathbb{R}
^{n}\right) $ if $t_{k}\rightarrow \infty $. Then it has a convergent
subsequence $\left\{ S^{\lambda _{k_{m}}}\left( t_{k_{m}}-t\right) \left(
u_{0k_{m}},u_{1k_{m}}\right) \right\} _{m=1}^{\infty }$ with the limit $%
\varphi _{0}$ $\epsilon $ $\overline{B_{0}}$ and by Lemma 2.2,%
\begin{equation}
S^{\lambda _{k_{m}}}\left( t\right) S^{\lambda _{k_{m}}}\left(
t_{k_{m}}-t\right) \left( u_{0k_{m}},u_{1k_{m}}\right) \rightarrow
S^{\lambda _{0}}\left( t\right) \varphi _{0}\text{ strongly in }H^{2}\left(
\mathbb{R}
^{n}\right) \times L^{2}\left(
\mathbb{R}
^{n}\right) \text{,}  \tag{2.37}
\end{equation}%
where $\lambda _{0}=\infty $. On the other hand, by Lemma 2.4,%
\begin{equation*}
\underset{t\rightarrow \infty }{\lim }\underset{\left( u_{0},u_{1}\right)
\in \overline{B_{0}}}{\sup }\left\Vert S^{\lambda _{0}}\left( t\right)
\left( u_{0},u_{1}\right) \right\Vert _{H^{2}\left(
\mathbb{R}
^{n}\right) \times L^{2}\left(
\mathbb{R}
^{n}\right) }=0
\end{equation*}%
and then for any $\epsilon >0$ there exists $t_{\epsilon }$ such that%
\begin{equation*}
\underset{\left( u_{0},u_{1}\right) \in \overline{B_{0}}}{\sup }\left\Vert
S^{\lambda _{0}}\left( t\right) \left( u_{0},u_{1}\right) \right\Vert
_{H^{2}\left(
\mathbb{R}
^{n}\right) \times L^{2}\left(
\mathbb{R}
^{n}\right) }<\frac{\epsilon }{2}\text{, }\forall t\geq t_{\epsilon }\text{.}
\end{equation*}%
Choosing $t=t_{\epsilon }$ in (2.37), we get%
\begin{equation*}
\left\Vert S^{\lambda _{k_{m}}}\left( t_{k_{m}}\right) \left(
u_{0k_{m}},u_{1k_{m}}\right) \right\Vert _{H^{2}\left(
\mathbb{R}
^{n}\right) \times L^{2}\left(
\mathbb{R}
^{n}\right) }<\epsilon \text{, }
\end{equation*}%
for large enough $m$, which, together with (2.36), contradicts (2.34). So,
our assumption is false and proof is completed for the globally Lipschitz
case.

$\left( ii\right) $ \textit{The superlinear case}: As mentioned in \cite[%
Remark 3.2]{3}, using techniques of that article, one can show that there
exists some $T_{1}>0$ such that for every $T>T_{1}$ there exists a constant $%
C\left( T\right) >0$ so that the following estimate holds%
\begin{equation}
E_{\lambda _{k}}\left( T,u_{0k},u_{1k}\right) \leq C\left( T\right) \left(
\int\nolimits_{0}^{T}\int\nolimits_{%
\mathbb{R}
^{n}}a\left( x\right) \left\vert u_{\lambda _{k}t}\left( t,x\right)
\right\vert ^{2}dxdt+\left\Vert u_{\lambda _{k}}\right\Vert _{L^{2}\left(
\left( 0,T\right) \times B(0,4r_{0})\right) }^{2}\right) \text{,}  \tag{2.38}
\end{equation}%
where $\left( u_{\lambda _{k}}\left( t\right) ,u_{\lambda _{k}t}\left(
t\right) \right) =S^{\lambda _{k}}\left( t\right) \left(
u_{0k},u_{1k}\right) $. The constant $C\left( T\right) $ only depends on the
nonlinearity $f$ and the constant $\delta $ in superlinear case (see \cite{3}%
, for details).\newline
\ \ Taking into account $E_{\lambda _{k}}\left( 0,u_{0k},u_{1k}\right) =1$
and (2.11), we have%
\begin{equation}
\int\nolimits_{0}^{T}\int\nolimits_{%
\mathbb{R}
^{n}}a\left( x\right) \left\vert u_{\lambda _{k}t}\left( t,x\right)
\right\vert ^{2}dxdt+\left\Vert u_{\lambda _{k}}\right\Vert _{L^{2}\left(
\left( 0,T\right) \times B(0,4r_{0})\right) }^{2}\leq \widehat{C}_{1}\text{.}
\tag{2.39}
\end{equation}%
By (2.38) and (2.39), it follows that the sequence $\left\{ F_{\lambda
_{k}}\left( u_{\lambda _{k}}\right) \right\} _{k=1}^{\infty }$ is bounded in
$L^{1}\left( \left( 0,T\right) \times B(0,4r_{0})\right) $, where%
\begin{equation}
F_{\lambda }\left( z\right) =\frac{1}{\lambda }\int\nolimits_{0}^{z}f\left(
\lambda s\right) ds=\frac{1}{\lambda ^{2}}F\left( \lambda z\right) \text{, \
\ }\forall \lambda >0\text{.}  \tag{2.40}
\end{equation}%
On the other hand, the condition (1.8) implies%
\begin{equation}
F\left( s\right) \geq c\left\vert s\right\vert ^{2+\delta }\text{, }\forall
\left\vert s\right\vert \geq 1  \tag{2.41}
\end{equation}%
with $c=\min \left\{ F\left( 1\right) ,F\left( -1\right) \right\} $.
Combining (2.38)-(2.41), we get%
\begin{equation*}
\lambda _{k}^{\delta }\int \int\nolimits_{\left\{ \left\vert u_{\lambda
_{k}}\right\vert \geq \lambda _{k}^{-1}\right\} \cap \left\{ \left(
0,T\right) \times B(0,4r_{0})\right\} }\left\vert u_{\lambda
_{k}}\right\vert ^{2+\delta }dxdt\leq \widehat{C}_{2}\text{,}
\end{equation*}%
which implies%
\begin{equation}
\underset{k\rightarrow \infty }{\lim }\int\nolimits_{0}^{T}\int%
\nolimits_{B(0,4r_{0})}\left\vert u_{\lambda _{k}}\right\vert ^{2+\delta
}dxdt=0\text{.}  \tag{2.42}
\end{equation}%
Moreover, by (2.26), we have%
\begin{equation*}
0\leq \int\nolimits_{0}^{t_{k}}\int\nolimits_{%
\mathbb{R}
^{n}}a\left( x\right) \left\vert u_{\lambda _{k}t}\left( t,x\right)
\right\vert ^{2}dxdt=1-E_{\lambda _{k}}\left( t_{k},u_{0k},u_{1k}\right)
\leq 1-C_{k}
\end{equation*}%
and since $t_{k}\rightarrow \infty $, $C_{k}\nearrow 1$, we get%
\begin{equation}
\underset{k\rightarrow \infty }{\lim }\int\nolimits_{0}^{T}\int\nolimits_{%
\mathbb{R}
^{n}}a\left( x\right) \left\vert u_{\lambda _{k}t}\left( t,x\right)
\right\vert ^{2}dxdt=0\text{,}  \tag{2.43}
\end{equation}%
for every $T>0$. Taking into account (2.42) and (2.43) in (2.38), we deduce%
\begin{equation*}
\underset{k\rightarrow \infty }{\lim }E_{\lambda _{k}}\left(
T,u_{0k},u_{1k}\right) =0
\end{equation*}%
and since the energy functional $E_{\lambda }\left( t,u_{0},u_{1}\right) $
is nonincreasing with respect to $t$, we obtain%
\begin{equation*}
\underset{k\rightarrow \infty }{\lim }E_{\lambda _{k}}\left(
t_{k},u_{0k},u_{1k}\right) =0
\end{equation*}%
which contradicts (2.34). Hence our assumption is false and the proof is
completed for superlinear case.
\end{proof}

Now we can prove the main result. Assume that $u\in C\left( [0,\infty
);H^{2}\left(
\mathbb{R}
^{n}\right) \right) \cap C^{1}\left( [0,\infty );L^{2}\left(
\mathbb{R}
^{n}\right) \right) $ is the solution of problem (1.1)-(1.2) with initial
data $\left( u_{0},u_{1}\right) \in H^{2}\left(
\mathbb{R}
^{n}\right) \times L^{2}\left(
\mathbb{R}
^{n}\right) $ and consider the problem (2.9) with $\lambda =\sqrt{E\left(
0,u_{0},u_{1}\right) }>0$. Then it is easy to see that $u_{\lambda }=\frac{u%
}{\lambda }$ is the solution of problem (2.9) with the initial data $\left(
u_{0\lambda },u_{1\lambda }\right) =\left( \frac{u_{0}}{\lambda },\frac{u_{1}%
}{\lambda }\right) \in $ $H^{2}\left(
\mathbb{R}
^{n}\right) \times L^{2}\left(
\mathbb{R}
^{n}\right) $ and%
\begin{equation*}
E_{\lambda }\left( t,u_{0\lambda },u_{1\lambda }\right) =\frac{1}{2}%
\left\Vert u_{\lambda t}\left( t\right) \right\Vert _{L^{2}(%
\mathbb{R}
^{n})}^{2}+\frac{1}{2}\left\Vert \Delta u_{\lambda }\left( t\right)
\right\Vert _{L^{2}\left(
\mathbb{R}
^{n}\right) }^{2}+\frac{\alpha }{2}\left\Vert u_{\lambda }\left( t\right)
\right\Vert _{L^{2}\left(
\mathbb{R}
^{n}\right) }^{2}
\end{equation*}%
\begin{equation*}
+\int\nolimits_{%
\mathbb{R}
^{n}}F_{\lambda }\left( u_{\lambda }\left( t,x\right) \right) dx=\frac{1}{%
2\lambda ^{2}}\left\Vert u_{t}\left( t\right) \right\Vert _{L^{2}\left(
\mathbb{R}
^{n}\right) }^{2}+\frac{1}{2\lambda ^{2}}\left\Vert \Delta u\left( t\right)
\right\Vert _{L^{2}\left(
\mathbb{R}
^{n}\right) }^{2}+\frac{\alpha }{2\lambda ^{2}}\left\Vert u\left( t\right)
\right\Vert _{L^{2}\left(
\mathbb{R}
^{n}\right) }^{2}
\end{equation*}%
\begin{equation*}
+\frac{1}{\lambda ^{2}}\int\nolimits_{%
\mathbb{R}
^{n}}F\left( u\left( t,x\right) \right) dx=\frac{1}{\lambda ^{2}}E\left(
t,u_{0},u_{1}\right) \text{.}
\end{equation*}%
Then, since $E_{\lambda }\left( 0,u_{0\lambda },u_{1\lambda }\right) =\frac{1%
}{\lambda ^{2}}E\left( 0,u_{0},u_{1}\right) =1$, by Lemma 2.6, there exist $%
t_{0}>0$ and some constant $\beta \in \left( 0,1\right) $ such that
\begin{equation*}
E_{\lambda }\left( t_{0},u_{0\lambda },u_{1\lambda }\right) \leq \beta \text{%
.}
\end{equation*}%
Hence, we have%
\begin{equation*}
E\left( t_{0},u_{0},u_{1}\right) \leq \lambda ^{2}\beta =E\left(
0,u_{0},u_{1}\right) \beta
\end{equation*}%
and by the successive iteration, we obtain%
\begin{equation*}
E\left( nt_{0},u_{0},u_{1}\right) \leq \beta ^{n}E\left(
0,u_{0,}u_{1}\right) \text{, }\forall n\in
\mathbb{N}
\text{,}
\end{equation*}%
for every $\left( u_{0,}u_{1}\right) \in H^{2}\left(
\mathbb{R}
^{n}\right) \times L^{2}\left(
\mathbb{R}
^{n}\right) $. Since the energy functional $E\left( t,u_{0},u_{1}\right) $
is nonincreasing with respect to $t$, for $t=nt_{0}+r$, \ $0\leq r<t_{0}$,\
we find%
\begin{equation*}
E\left( t,u_{0},u_{1}\right) =E\left( nt_{0}+r,u_{0},u_{1}\right) \leq
E\left( nt_{0},u_{0},u_{1}\right) \leq \beta ^{n}E\left(
0,u_{0},u_{1}\right) \text{.}
\end{equation*}%
Now, denoting $\gamma =\frac{1}{t_{0}}\ln (\frac{1}{\beta })$, from the last
inequality, we get%
\begin{equation*}
E\left( t,u_{0},u_{1}\right) \leq e^{-\gamma t_{0}n}E\left(
0,u_{0},u_{1}\right) =e^{-\gamma t}e^{\gamma r}E\left( 0,u_{0},u_{1}\right)
\end{equation*}%
\begin{equation*}
\leq e^{-\gamma t}e^{\gamma t_{0}}E\left( 0,u_{0},u_{1}\right) =CE\left(
0,u_{0},u_{1}\right) e^{-\gamma t}\text{,}
\end{equation*}%
where $C=$ $e^{\gamma t_{0}}$. Hence, the proof of Theorem 1.1 is completed.



\begin{thebibliography}{99}
\bibitem{1} E. Zuazua, Stability and decay for a class of nonlinear
hyperbolic problems, Asymptotic Analysis, 1 (1988) 161--185.

\bibitem{2} E. Zuazua, Exponential decay for the semilinear wave equation
with locally ditributed damping, Comm. Partial Differential Equations, 15
(1990) 205--235.

\bibitem{3} E. Zuazua, Exponential decay for the semilinear wave equation
with localized damping in unbounded domains, J.Math Pures Appl., 70 (1991)
513--529.

\bibitem{4} M. Nakao, Decay of solutions of wave equation with a local
degenerate dissipation, Israel J. Math., 95 (1996) 25--42.

\bibitem{5} M. Nakao, Decay of solutions of wave equation with a local
nonlinear dissipation, Math. Ann., 305 (1996) 403--417.

\bibitem{6} L. Tebou, Stabilization of the wave equation with localized
nonlinear damping, J. Differential Equtions, 145 (1998) 502--524.

\bibitem{7} L. Tebou, Well posedness and energy decay estimates for the
damped wave equations with $L^{r}$ localizing coefficient, Comm.Partial
Differential Equations, 23 (1998) 1839--1855.

\bibitem{8} R. B. Guzman, M. Tucsnak, Energy decay estimates for the damped
plate equation with a local degenerated dissipation, Systems \& Control
Letters, 48 (2003) 191 -- 197.

\bibitem{9} M.M. Cavalcanti, V. N. Domingos Cavalcanti, T. F. Ma,
Exponantial decay of the viscoelastic Euler-Bernoulli equation with a
nonlocal dissipation in general domains, Differential and Integral Equations
17 (2004) 495--510.

\bibitem{10} M.M. Cavalcanti, V. N. Domingos Cavalcanti, J. A. Soriano,
Global existence and asymptotic stability for the nonlinear and generalized
damped extensible plate equation, Communications in Contemporary
Mathematics, 5 (2004) 705-731.

\bibitem{11} L. Tebou, Well-posedness and stability of a hinged plate
equation with a localized nonlinear structural damping, Nonlinear Analysis
71 (2009) 2288--2297.

\bibitem{12} J. Li, Y. Wu, Exponential stability of the plate equations with
potential of second order and indefinite damping, J. Math. Anal. Appl. 359
(2009) 62--75.

\bibitem{13} J.Y. Park, J.R. Kang, Energy decay estimates for the
Bernoulli-Euler type equation with a local degenerate dissipation, Applied
Mathematics Letters, 23 (2010) 1274--1279.

\bibitem{14} A. Ruiz, Unique continuation for weak solutions of the wave
equation plus a potential, J.Math Pures Appl., 710 (1992) 455--467.

\bibitem{15} A. Kh. Khanmamedov, Global attractors for von Karman equations
with nonlinear interior dissipation, J. Math. Anal. Appl. 318 (2006) 92--101.

\bibitem{16} I. Chueshov, I. Lasiecka, Long-time behavior of second order
evolution equations with nonlinear damping, Memoirs of AMS, 195 (2008).

\bibitem{17} I. Chueshov, I. Lasiecka, Von Karman Evolution
Equations:Well-posedness and long-time dynamics, Springer, 2010.

\bibitem{18} A. Kh. Khanmamedov, Global attractors for the plate equation
with localized damping and a critical exponent in an unbounded domain,
J.Differential Equations, 225 (2006) 528--548.

\bibitem{19} A. Kh. Khanmamedov, Global attractors for 2-D wave equations
with displacement dependent damping, Math. Methods Appl. Sci., 33 (2010)
177-187.
\end{thebibliography}
\end{document}